\renewcommand{\H}{\mathbb{H}}
\newcommand{\C}{\mathbb{C}}
\newcommand{\R}{\mathbb{R}}
\newcommand{\Z}{\mathbb{Z}}
\newcommand{\N}{\mathbb{N}}
\newcommand{\SL}{\mathrm{SL}}
\newcommand{\mockf}{\mathrm{f}}
\newcommand{\our}{\mathcal{C}}
\newcommand{\wid}{\mathbf{\mathrm{w}}}
\theoremstyle{definition}
\theoremstyle{plain}
\newtheorem{theorem}{Theorem}[section]
\newtheorem{lem}[theorem]{Lemma}
\newtheorem{prop}[theorem]{Proposition}
\theoremstyle{remark}
\def\imod#1{\allowbreak\mkern10mu({\operator@font mod}\,\,#1)}
\begin{document}
\title[Congruences for mock theta functions]{On certain explicit congruences for mock theta functions}
\author{Matthias Waldherr}
\thanks{The author is supported by Graduiertenkolleg "Global Structures in Geometry und Analysis"}

\address{Mathematical Institute\\University of
Cologne\\ Weyertal 86-90 \\ 50931 Cologne \\Germany}
\email{mwaldher@math.uni-koeln.de}

\begin{abstract}
Recently, Garthwaite-Penniston \cite{garthpennis} have shown that
the coefficients of Ramanujan's mock theta function $\omega$ satisfy
infinitely many congruences of Ramanujan-type. In this work we give
the first explicit examples of congruences for Ramanujan's mock
theta function $\omega$ and another mock theta function $\our$.
\end{abstract}

\maketitle

\section{Introduction and Statement of Results}

The famous "Ramanujan congruences" for the partition function
\begin{align*}
p(5n+4) &\equiv 0 \pmod{5}\\
p(7n+5) &\equiv 0 \pmod{7}\\
p(11n+6) &\equiv 0 \pmod{11}
\end{align*}
found nearly 100 years ago, have since had an enormous impact on the
theory of $q$-series arising in combinatorics. One was trying to
understand why such congruences appear and whether there are
analogous Ramanujan-type congruences for other interesting
$q$-series. The first question is now well understood. The
\textit{partition rank} introduced by Dyson \cite{dyson} explains
the first two congruences from a combinatorial point of view. The
\textit{crank} later defined by Andrews and Garvan \cite{andgarv}
even explains all three congruences. For the second question, many
examples for similar congruences have been found for other
$q$-series, interestingly even for generating series of the rank and
crank itself, see \cite{bringonoCon} and \cite{mahlburg}.

Using computer calculations it is easy to find candidates for such
congruences for a given $q$-series. However, there is no general
method for proving them. In most cases the proofs rely on ingenious
$q$-series manipulations and $q$-series identities. Only in the
special case, when the $q$-series in question is a modular form,
there is an approach, which is capable of proving or disproving any
given congruence. This method is based on the fact - known as
Sturm's theorem \cite{sturm} - that it suffices to check the
congruences for the coefficients of a modular forms up to some
explicitly computable bound in order to conclude that it holds for
all coefficients.

Many interesting examples of $q$-series which arise in combinatorics
are not modular. Famous examples are given by Ramanujan's mock theta
functions
\[ \mockf(\tau):=1+\sum_{n=1}^\infty \frac{q^{n^2}}{(1+q)^2(1+q^2)^2\cdots(1+q^{n})^2}=\sum_{n=0}^\infty a_\mockf(n)q^n\]
and
\[ \omega(\tau):=\sum_{n=0}^\infty \frac{q^{2n^2+2n}}{(1-q)^2(1-q^3)^2\cdots(1-q^{1+2n})^2}=\sum_{n=0}^\infty a_\omega(n)q^n,\]
where $q=e^{2 \pi i \tau}$ with $\tau$ in the upper halfplane $\H$.

Over the years, there has been much effort by authors like Watson
\cite{watson}, Dragonette and Andrews \cite{andrews}
\cite{dragonette} to understand the mock theta functions. A new
chapter in the study of mock theta functions, however, was opened
only recently by Zwegers. In \cite{zwegers} and \cite{zwegersthesis}
he related Ramanujan's mock theta functions to harmonic weak Maass
forms which are certain nonholomorphic generalizations of classical
modular forms (for a precise definition we refer to the next
chapter). This result turned out to be the starting point for
further research in this area. Since questions about asymptotics,
exact formulas and congruences are better understood in the context
of Maass forms and modular forms, it became possible to prove
longstanding conjectures on mock theta functions: In
\cite{bringTAMS} Bringmann proves asymptotic formulas for rank
generating functions. In \cite{bringonoAD} Bringmann-Ono prove the
Andrews-Dragonette conjecture concerning an exact formula for the
coefficients of the mock theta function $\mockf$ and in
\cite{bringonoCon} Bringmann-Ono prove the existence of infinitely
many Ramanujan-type congruences for $\mockf$. Garthwaite
\cite{garth} and Garthwaite-Penniston \cite{garthpennis} have
obtained similar results for $\omega$.

Despite the fact that there are infinitely many congruences to
authors knowledge not a single example has been exhibited yet. Using
Borcherds' products Bruinier-Ono \cite{bruinierono} show congruences
for $\omega$, which are, however, not of Ramanujan-type.

In contrast to that, it is quite easy to find candidates for
congruences simply by computing many coefficients and searching for
congruence patterns. This was done by Jeremy Lovejoy for several
$q$-series including $\omega$ and the following function
\[ \our(\tau):=\sum_{n=0}^\infty (-1)^n \frac{(q;q^2)_n}{(-q;q)_n^2}=2\prod_{n=1}^\infty  \frac{1-q^{2n-1}}{1-q^{2n}}\sum_{n \in \Z}  \frac{q^{\frac{1}{2}n(n+1)}}{1+q^n}=\sum_{n=0}^\infty a_\our(n)q^n, \]
which we will call Cesaro function (since the series representation
has to be interpreted it in the Cesaro sense), and which is also
related to Zwegers' work.

The objective of this paper is twofold. Firstly, we prove
congruences for $\omega$ and $\our$ thereby verifying the
conjectures of Lovejoy and giving the first explicit examples of
congruences for $\omega$. Our results can be summarized in the
following theorem.

\begin{theorem}\label{thm_mainthm}
For all $n \geq 0$ the following congruences hold:
\begin{align*}
a_\omega(40n+27)&\equiv a_\omega(40n+35) \equiv 0 \pmod{5},\\
a_\our(3n+1) &\equiv 0 \pmod{3},\\
a_\our(7n+2) &\equiv a_\our(7n+3) \equiv a_\our(7n+5) \equiv 0
\pmod{7}.
\end{align*}
\end{theorem}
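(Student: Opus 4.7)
The plan is to adapt the method of Bringmann--Ono \cite{bringonoCon} and Garthwaite--Penniston \cite{garthpennis}, which establishes the \emph{existence} of such congruences, to prove the concrete statements above. By Zwegers' work \cite{zwegers,zwegersthesis}, one can complete $\omega(\tau)$ and $\our(\tau)$ to harmonic weak Maass forms $\widehat{\omega}$ and $\widehat{\our}$ of weight $\tfrac{1}{2}$ on some $\Gamma_0(N)$ with explicit multiplier systems. My first step would be to pin this modular data down precisely --- in particular the level, the multiplier, and the shadow of each form, which is a weight $\tfrac{3}{2}$ unary theta series supported on a known union of arithmetic progressions. For $\omega$ the required data is essentially in the literature; for $\our$ it has to be extracted from the product and Appell--Lerch representations given in the introduction.

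To isolate the target residue classes I would apply the sieving operators $U_\ell\circ V_\ell$ together with twists by quadratic Dirichlet characters to $\widehat{\omega}$ and $\widehat{\our}$. The key arithmetic input, in the spirit of Bringmann--Ono and Garthwaite--Penniston, is that in each of the five cases at hand the target progression avoids the support of the corresponding shadow --- a short arithmetic check with the relevant squares (for instance, $40n+27$ and $40n+35$ are both $\equiv 3 \pmod 4$ and hence are never squares). The non-holomorphic part of the sieved completion therefore vanishes identically on the progression, so the sieved series is a genuine holomorphic weakly modular form of half-integral weight.

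Next I would multiply this series by a suitably chosen eta-quotient $\prod_d \eta(d\tau)^{m_d}$, picked so that the product has integral weight, trivial multiplier system, and is congruent to $1 \pmod{\ell}$ on the target progression. The result is a holomorphic modular form in $M_k(\Gamma_0(N'),\chi)$ for some explicit $k$, $N'$, $\chi$, whose $q$-expansion agrees modulo $\ell$ with the sieved series on the progression. Sturm's theorem \cite{sturm} then reduces the claimed congruence to the finite check that the first $\frac{k}{12}[\SL_2(\Z):\Gamma_0(N')]$ coefficients of this modular form vanish modulo $\ell$, which can be verified by direct computation.

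The main obstacle will be the bookkeeping in the last two steps: one must choose the eta-quotient so that the level $N'$ and weight $k$ keep the Sturm bound computationally tractable, while simultaneously ensuring that sieving, character twisting, and eta-multiplication all combine into a genuine holomorphic modular form with trivial multiplier. A secondary difficulty is assembling the automorphic data for $\our$ from scratch, since its shadow and level do not appear in the earlier literature and must be read off directly from the product and Appell--Lerch expressions.
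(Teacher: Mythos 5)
Your proposal follows essentially the same route as the paper: realize $\omega$ and $\our$ as holomorphic parts of weight $\tfrac12$ harmonic weak Maass forms via Zwegers' completions, sieve onto the target progressions (the paper uses $U_{r,m}$ for $\our$ and quadratic character twists for $\omega$), observe that the shadow's support misses the progressions so the sieved form is weakly holomorphic, multiply by an explicit $\eta$-product to kill the poles at the cusps, and finish with Sturm's theorem plus a finite computation. The only cosmetic difference is that the paper applies a half-integral weight version of Sturm directly to the product and recovers the original congruence by induction using that the $\eta$-product has leading coefficient $1$, rather than arranging integral weight, trivial multiplier, and an $\eta$-quotient congruent to $1 \pmod{\ell}$.
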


Secondly, in the course of proving the theorem it will turn out that
our approach is capable of proving or disproving a given congruence
of Ramanujan type (i.e the congruences that are supported on
arithmetic progressions) for $\omega$ and $\our$ as long as a
certain condition is satisfied. To explain this condition we note
that both $\our$ and $\omega$ will appear as holomorphic parts of
harmonic weak Maass forms. The condition now requires that the
congruences only involve coefficients of the Fourier expansion of
the Maass form which belong to the holomorphic part. We reduce
proving a congruence to a finite amount of computation in the same
fashion as Sturm's theorem does this for modular forms. In fact, the
application of Sturm's theorem is a crucial step in our argument.

\section*{Acknowledgements}

I thank Kathrin Bringmann for introducing me into this topic, many
interesting discussions and helpful comments on earlier versions of
this paper. I also thank Claudia Alfes for helpful comments on
earlier versions of this paper. Furthermore I thank Jeremy Lovejoy
for sharing with me his computational results. Finally, I would like
to thank Ben Kane and Christian Reitwiessner for helping me to
resolve issues regarding the implementation of the algorithms used
in this work.

\section{Basics facts about harmonic weak Maass forms}
In this paper we use standard terminology of the theory of modular
forms. For basic definitions the reader is referred to chapter 1 of
\cite{ono}. Additionally to modular forms, harmonic Maass forms will
play a crucial role. These functions were originally introduced by
Bruinier-Funke in \cite{bruinierfunke}. An excellent reference for
harmonic Maass forms for the theta multiplier and its applications
is \cite{ono2}. However, in this paper we also work with other
multiplier systems. Therefore we give a definition of modular forms
and harmonic weak Maass forms for arbitrary multiplier systems. A
function $f:\H \to \C$ is called a \textit{weakly holomorphic
modular form } of weight $\frac{k}{2}$ with respect to a congruence
subgroup $\Gamma \leq \SL_2(\Z)$ and a multiplier system $\nu$ if
the following conditions hold:
\begin{enumerate}
\item $f$ satisfies the modular transformation property:
\[ f(\tfrac{a\tau+b}{c\tau+d})=\nu\left(%
\begin{smallmatrix}
  a & b \\
  c & d \\
\end{smallmatrix}%
\right)(c\tau+d)^{\frac{k}{2}}f(\tau).\]
\item $f$ is holomorphic on $\H$.
\item $f$ has at most linear exponential growth at the cusps.
\end{enumerate}
We call a function $f:\H \to \C$ a \textit{harmonic weak Maass
form}, if the second condition is replaced by the weaker condition
that $f$ is annihilated by the weight $\frac{k}{2}$ hyperbolic
Laplacian $\Delta_\frac{k}{2}:=-y^2\left(\frac{\partial^2}{\partial
x^2}+\frac{\partial^2}{\partial
y^2}\right)+\frac{ik}{2}\left(\frac{\partial}{\partial x} +i
\frac{\partial}{\partial y} \right)$. Note that this implies that
$f$ is a real-analytic function. We will write
$M^!_\frac{k}{2}(\Gamma,\nu)$ and $H_\frac{k}{2}(\Gamma,\nu)$ for
the space of weakly holomorphic modular forms and harmonic weak
Maass forms, respectively. By $M_\frac{k}{2}(\Gamma,\nu)$ we denote
the space of holomorphic modular forms, i.e, weakly holomorphic
modular forms which are holomorphic at all the cusps.\\
Let $\chi$ be a Dirichlet character and define the
$\nu_{\theta,\chi}:\Gamma_0(4) \to \{ z \in \C| \ |z|=1\}$ by
\[ \nu_{\theta,\chi}\left(%
\begin{smallmatrix}
  a & b \\
  c & d \\
\end{smallmatrix}%
\right):=\chi(d)\left( \tfrac{c}{d}\right) \epsilon_d^{-1},\] where
$\left( \tfrac{c}{d}\right)$ denotes the Jacobi symbol and
\[\epsilon_d:=\left\{%
\begin{array}{ll}
    1 & \hbox{if } d \equiv 1 \pmod{4}, \\
    i & \hbox{if } d \equiv 3 \pmod{4}.\\
\end{array}%
\right.\] This function is called the $\theta-$multiplier and a
harmonic weak Maass forms with respect to the this multiplier will
just be called a harmonic weak Maass with character $\chi$. The
$\eta$\textit{-multiplier} $\nu_\eta:\SL_2(\Z) \to \{ z \in \C| \
|z|=1\}$ is defined by
\[ \nu_\eta\left(%
\begin{smallmatrix}
  a & b \\
  c & d \\
\end{smallmatrix}%
\right):= \frac{1}{\sqrt{c\tau+d}
}\frac{\eta(\frac{a\tau+b}{c\tau+d})}{\eta(\tau)},\] where $\eta$ is
the classical Dedekind $\eta$-function. Both the $\theta$- and the
$\eta$-multiplier and any integral power of them are multiplier
systems for all half-integral weights.

\section{The Cesaro function $\our$}
In this section we prove the congruences for $\our$ using results
form Zwegers' thesis.
\subsection{Zwegers' results on $\mu$}
We first review some results of Zwegers \cite{zwegersthesis}, who
defines the following function
\[ \mu(u,v;\tau):=\frac{e^{\pi i u}}{\theta(v;\tau)} \sum_{n \in \Z} \frac{(-1)^n e^{\pi i (n^2+n)\tau+2 \pi i nv}}{1-e^{2 \pi i n \tau +2 \pi i u}},\]
for $\tau \in \H$ and $u,v \in \C \setminus (\Z \tau + \Z)$ and
where
\[ \theta(v;\tau):=\sum_{\nu \in \frac{1}{2}+\Z} e^{\pi i \nu^2 \tau+2\pi i \nu(v+\frac{1}{2})}\]
is the classical Jacobi theta function. The function $\mu$ is
holomorphic but does not transform like a modular form. Zwegers
shows that we can complete $\mu$ by adding a non-holomorphic but
real analytic correction term $R$ in the following way
\[ \widetilde{\mu}(u,v;\tau):=\mu(u,v;\tau)+ \frac{i}{2}R(u-v;\tau),\]
so that the resulting function $\widetilde{\mu}$ has nice
transformation properties. We will not recall Zwegers' construction
of the correction term $R$, since we do not use it. We need another
description of $R$ in terms of following unary theta series of
weight $\frac{3}{2}$:
\[ g_{a,b}(\tau):=\sum_{\nu \in a+ \Z} \nu e^{\pi i \nu^2\tau+2 \pi i \nu b},\]
where $a,b \in \R$.
\begin{theorem}[\cite{zwegersthesis}, Theorem 1.16]
For $a \in ]\frac{1}{2},\frac{1}{2}[$ and $b \in \R$ we have
\[ -e^{- \pi i a^2\tau+2 \pi i a(b+\frac{1}{2})}R(a\tau+b;\tau)=\int_{-\overline{\tau}}^{i \infty} \frac{g_{a+\frac{1}{2},b+\frac{1}{2}}(t)}{\sqrt{-i(t+\tau)}}dt.\]
\end{theorem}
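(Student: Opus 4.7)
The plan is to unfold the integral on the right-hand side into a sum over $\mu \in a+\tfrac{1}{2}+\Z$ and to identify the resulting series with Zwegers' original definition of $R$, which is given in terms of incomplete Gaussian integrals (error functions). Concretely, Zwegers defines
\[ R(u;\tau) = \sum_{\nu \in \frac{1}{2} + \Z} \bigl\{\operatorname{sgn}(\nu) - E\bigl((\nu + \tfrac{\Im u}{\Im \tau})\sqrt{2 \Im \tau}\,\bigr)\bigr\}\, (-1)^{\nu - \frac{1}{2}}\, e^{-\pi i \nu^2 \tau - 2\pi i \nu u}, \]
where $E(z) := 2\int_0^z e^{-\pi s^2}\, ds$. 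At $u = a\tau + b$, since $a \in \,\,]\!-\!\tfrac{1}{2}, \tfrac{1}{2}[$ and $\nu \in \tfrac{1}{2} + \Z$, the number $\nu+a$ has the same sign as $\nu$, and the elementary identity
\[ \operatorname{sgn}(\nu) - E\bigl((\nu+a)\sqrt{2y}\,\bigr) = \operatorname{sgn}(\nu+a)\, \operatorname{erfc}\bigl(|\nu+a|\sqrt{2\pi y}\,\bigr), \qquad y := \Im \tau, \]
puts $R(a\tau+b;\tau)$ into a sum of error-function tails that I would reindex by $\mu := \nu + a \in a + \tfrac{1}{2} + \Z$.

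Second, I would substitute the defining series of $g_{a+\frac{1}{2}, b+\frac{1}{2}}$ into the integral and interchange sum and integration, justified by the rapid Gaussian decay of the summands. The task reduces to evaluating, for each $\mu \in a+\tfrac{1}{2}+\Z$, the elementary integral
\[ I_\mu := \int_{-\overline{\tau}}^{i\infty} \frac{e^{\pi i \mu^2 t}}{\sqrt{-i(t+\tau)}}\, dt. \]
Parametrizing the contour by $t = -\overline{\tau} + ir$ with $r \geq 0$ gives $-i(t+\tau) = 2y+r$ (fixing the principal branch of the square root on the positive reals) and $dt = i\, dr$, so $I_\mu$ becomes a real integral on $[0,\infty)$. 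A further substitution $s = 2y + r$ together with the standard identity $|\mu|\int_{2y}^\infty s^{-1/2} e^{-\pi \mu^2 s}\, ds = \operatorname{erfc}(|\mu|\sqrt{2\pi y}\,)$ then yields
\[ I_\mu = i\, e^{-\pi i \mu^2 \tau}\, \frac{1}{|\mu|}\, \operatorname{erfc}\!\bigl(|\mu|\sqrt{2\pi y}\,\bigr). \]

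Combining the two computations, the factor $\mu/|\mu|$ emerging from $I_\mu$ reproduces the $\operatorname{sgn}(\nu+a)$ in the expansion of $R(a\tau+b;\tau)$, and the error-function tails match. What remains is to check that the prefactor $-e^{-\pi i a^2\tau + 2\pi i a(b+\frac{1}{2})}$ on the left cancels the residual phases produced by the reindexing $\nu = \mu - a$ in the exponent $e^{-\pi i \nu^2 \tau - 2\pi i \nu(a\tau + b)}$, and that the factor $i$ coming out of $I_\mu$ combines with $(-1)^{\nu - \frac{1}{2}}$ and $e^{2\pi i \mu(b+\frac{1}{2})}$ to reproduce the overall minus sign on the left. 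This phase bookkeeping---tracking the half-integer exponents, fixing a consistent branch of the square root along the vertical contour through $-\overline{\tau}$, and correctly carrying the sign of $\mu$---is the main obstacle; once it is performed carefully the two sides agree term by term, and absolute convergence finishes the argument.
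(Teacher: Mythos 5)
First, note that the paper itself contains no proof of this statement: it is quoted verbatim as Theorem 1.16 of Zwegers' thesis, and the author explicitly declines to recall the construction of $R$. So the only meaningful comparison is with Zwegers' own argument, and your strategy is exactly his: expand $g_{a+\frac12,b+\frac12}$, integrate termwise (legitimate since $-i(t+\tau)=2y+r\geq 2y>0$ along the contour, so there is no endpoint singularity and the Gaussian decay is uniform), evaluate the elementary integral $I_\mu$, and compare with the series defining $R$ via $\operatorname{sgn}(\nu)-E(\nu\sqrt{2y})=\operatorname{sgn}(\nu)\beta(2\nu^2 y)$, where $\beta(x)=\int_x^\infty s^{-1/2}e^{-\pi s}\,ds=\operatorname{erfc}(\sqrt{\pi x})$. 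Your evaluation $I_\mu=\frac{i}{|\mu|}e^{-\pi i\mu^2\tau}\operatorname{erfc}(|\mu|\sqrt{2\pi y})$ is correct, and you correctly identify that $\operatorname{sgn}(\nu+a)=\operatorname{sgn}(\nu)$ is where the hypothesis $a\in\,]-\tfrac12,\tfrac12[$ enters (the interval printed in the statement is a typo for this).

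However, the one step you defer --- the ``phase bookkeeping'' --- is precisely where the content of the theorem sits, and you assert its outcome without performing it. If you carry it out, it does \emph{not} close for the statement as printed. Writing $\mu=\nu+a$, the exponent $-\pi i\nu^2\tau-2\pi i\nu(a\tau+b)$ becomes $-\pi i\mu^2\tau+\pi ia^2\tau-2\pi i\mu b+2\pi iab$; combined with $(-1)^{\nu-\frac12}=-i\,e^{\pi i\mu}e^{-\pi ia}$ and the prefactor $-e^{-\pi ia^2\tau+2\pi ia(b+\frac12)}$, the left-hand side equals $i\,e^{4\pi iab}\sum_\mu\operatorname{sgn}(\mu)\,e^{-\pi i\mu^2\tau-2\pi i\mu(b-\frac12)}\operatorname{erfc}(|\mu|\sqrt{2\pi y})$, whereas the right-hand side equals $i\sum_\mu\operatorname{sgn}(\mu)\,e^{-\pi i\mu^2\tau+2\pi i\mu(b+\frac12)}\operatorname{erfc}(|\mu|\sqrt{2\pi y})$. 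The termwise ratio is $e^{-4\pi ib\nu}$ with $\nu\in\frac12+\Z$, which is identically $1$ only when $b\in\Z$. The identity your computation actually establishes --- and the one Zwegers states --- has $R(a\tau-b;\tau)$ on the left; the version with $R(a\tau+b;\tau)$ is a transcription slip (harmless for the paper, which only invokes the case $a=b=0$). So: right method, correct key integral, but the final matching is a genuine gap, and doing it honestly is required either to finish the proof of the corrected statement or to discover that the printed one fails for general $b$.
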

Zwegers then proves transformation formulas for $\widetilde{\mu}$.
From his results (Theorem 1.11 of \cite{zwegersthesis}) we easily
obtain the following proposition.
\begin{prop}[]\label{prop_jacobiform}
For any $z \in
\C$, any $\tau \in \H$ and any $\left(%
\begin{smallmatrix}
  a & b \\
  c & d \\
\end{smallmatrix}%
\right) \in \SL_2(\Z)$, we have:
\[ \widetilde{\mu}\left(\tfrac{z}{c\tau+d},\tfrac{z}{c\tau+d};\tfrac{a\tau+b}{c\tau+d}\right)=\nu_\eta^{-3}\left(%
\begin{smallmatrix}
  a & b \\
  c & d \\
\end{smallmatrix}%
\right)(c\tau+d)^{\frac{1}{2}} \widetilde{\mu}(z,z;\tau).\]
Furthermore, for any $z \in \C$ and any $\tau \in \H$ we have
\[ \widetilde{\mu}(z + \tau, z + \tau;\tau)= \widetilde{\mu}(z,z;\tau) \text{ and } \widetilde{\mu}(z + 1,z+1;\tau )= \widetilde{\mu}(z,z;\tau).\]
\end{prop}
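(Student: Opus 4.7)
The plan is to derive both assertions by specializing Zwegers' general transformation formulas for $\widetilde{\mu}(u,v;\tau)$ from Theorem 1.11 of \cite{zwegersthesis} to the diagonal $u=v=z$. The guiding observation is that $\widetilde{\mu}$ transforms like a non-holomorphic Jacobi form of weight $\frac{1}{2}$ whose index-type correction factors are controlled by the difference $u-v$; restricting to $u=v$ makes these factors collapse to $1$.

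For the modular transformation, Zwegers' formula has the shape
\[ \widetilde{\mu}\!\left(\tfrac{u}{c\tau+d},\tfrac{v}{c\tau+d};\tfrac{a\tau+b}{c\tau+d}\right) = \chi(\gamma)\,(c\tau+d)^{1/2}\,e^{-\pi i c(u-v)^2/(c\tau+d)}\,\widetilde{\mu}(u,v;\tau), \]
with an explicit multiplier $\chi$ on $\SL_2(\Z)$. Setting $u=v=z$ kills the exponential factor and leaves exactly the stated automorphy relation, up to identifying $\chi$ with $\nu_\eta^{-3}$. Since $\SL_2(\Z)$ is generated by $T=\bigl(\begin{smallmatrix}1&1\\0&1\end{smallmatrix}\bigr)$ and $S=\bigl(\begin{smallmatrix}0&-1\\1&0\end{smallmatrix}\bigr)$, this reduces to a finite check on two matrices, using the standard values $\nu_\eta(T)=e^{\pi i/12}$ and $\nu_\eta(S)=e^{-\pi i/4}$ together with Zwegers' explicit values of $\chi$; these arise essentially because $\theta(v;\tau)$ in the denominator of $\mu$ carries multiplier $\nu_\eta^3$, while the remaining factor $e^{\pi i u}$ and the Appell-Lerch sum contribute trivial multiplier.

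For the elliptic transformations I would invoke Zwegers' one-variable shift laws. Translation by $1$ gives $\widetilde{\mu}(u+1,v;\tau)=-\widetilde{\mu}(u,v;\tau)$ and analogously in $v$, so performing both simultaneously yields $\widetilde{\mu}(z+1,z+1;\tau)=(-1)^2\widetilde{\mu}(z,z;\tau)=\widetilde{\mu}(z,z;\tau)$. For translation by $\tau$, each one-variable shift introduces a sign together with an exponential prefactor depending linearly on $u-v$. Performing the shift in both variables produces two such prefactors whose exponents are opposite on the diagonal $u=v$, so they cancel; the two sign factors also cancel, giving $\widetilde{\mu}(z+\tau,z+\tau;\tau)=\widetilde{\mu}(z,z;\tau)$.

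The main obstacle is the careful bookkeeping of signs and exponential prefactors in Zwegers' $\tau$-shift formulas, and verifying that $\chi$ agrees with $\nu_\eta^{-3}$ on the two generators $S$ and $T$. Beyond this bookkeeping the argument is purely formal, consisting of substituting $u=v=z$ into formulas that are already in the literature.
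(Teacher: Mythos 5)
Your proposal is correct and matches the paper's approach exactly: the paper derives this proposition by specializing Zwegers' Theorem 1.11 to the diagonal $u=v=z$, where the exponential factor $e^{-\pi i c(u-v)^2/(c\tau+d)}$ and the elliptic prefactors collapse to $1$. The only simplification you are entitled to is that Zwegers already states the modular multiplier as $\nu_\eta^{-3}$, so no separate identification of $\chi$ on the generators $S$ and $T$ is needed.
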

\subsection{Modular transformation properties}
The holomorphic part of the function
$\widetilde{\mu}(\frac{1}{2},\frac{1}{2};\tau)$ will turn out to be
related to $\our$. In this section we will study the modular
transformation properties of
$h_1(\tau):=\widetilde{\mu}(\frac{1}{2},\frac{1}{2};\tau)$. For that
purpose we additionally need the functions
$h_2(\tau):=\widetilde{\mu}(\frac{\tau}{2},\frac{\tau}{2};\tau)$ and
$h_3(\tau):=\widetilde{\mu}(\frac{\tau+1}{2},\frac{\tau+1}{2};\tau)$.\\
We use the following abbreviations $T=\left(%
\begin{smallmatrix}
  1 & 1 \\
  0 & 1\\
\end{smallmatrix}%
\right)$ and $S=\left(%
\begin{smallmatrix}
  0 & -1 \\
  1 & 0\\
\end{smallmatrix}%
\right)$.
\begin{lem}\label{lem_hmod}
We have
\[ \mathbf{h}(\tau+1)=\left(%
\begin{array}{ccc}
  \nu_\eta^{-3}(T) & 0 & 0 \\
  0 & 0 & \nu_\eta^{-3}(T) \\
  0 & \nu_\eta^{-3}(T) & 0 \\
\end{array}%
\right)\mathbf{h}(\tau),\]
\[ \frac{1}{\sqrt{\tau}}\mathbf{h}(-\tfrac{1}{\tau})=\left(%
\begin{array}{ccc}
  0 & \nu_\eta^{-3}(S) & 0 \\
  \nu_\eta^{-3}(S) & 0 & 0 \\
  0 & 0 & \nu_\eta^{-3}(S) \\
\end{array}%
\right)\mathbf{h}(\tau),\] where $\textbf{h}:=(h_1,h_2,h_3)^T$.
\end{lem}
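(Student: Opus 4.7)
The plan is to compute each component of $\mathbf{h}(\tau+1)$ and $\frac{1}{\sqrt{\tau}}\mathbf{h}(-1/\tau)$ entry by entry, in each case combining Proposition~\ref{prop_jacobiform} with a judicious choice of the parameter $z$ and then, if necessary, using the elliptic transformations $\widetilde{\mu}(z+1,z+1;\tau) = \widetilde{\mu}(z,z;\tau)$ and $\widetilde{\mu}(z+\tau,z+\tau;\tau) = \widetilde{\mu}(z,z;\tau)$ from the second half of that proposition.

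For $T$, note that $c\tau+d = 1$, so Proposition~\ref{prop_jacobiform} reduces to $\widetilde{\mu}(z,z;\tau+1) = \nu_\eta^{-3}(T)\widetilde{\mu}(z,z;\tau)$. Taking $z = \frac{1}{2}$ yields $h_1(\tau+1) = \nu_\eta^{-3}(T)h_1(\tau)$ immediately. Taking $z = \frac{\tau+1}{2}$ gives $h_2(\tau+1) = \widetilde{\mu}(\frac{\tau+1}{2},\frac{\tau+1}{2};\tau+1) = \nu_\eta^{-3}(T)\widetilde{\mu}(\frac{\tau+1}{2},\frac{\tau+1}{2};\tau) = \nu_\eta^{-3}(T)h_3(\tau)$. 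Taking $z = \frac{\tau+2}{2}$ gives $h_3(\tau+1) = \nu_\eta^{-3}(T)\widetilde{\mu}(\frac{\tau}{2}+1,\frac{\tau}{2}+1;\tau)$, and the elliptic shift by $1$ reduces this to $\nu_\eta^{-3}(T)h_2(\tau)$.

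For $S$, we have $c\tau+d = \tau$, so the transformation formula becomes $\widetilde{\mu}(\frac{z}{\tau},\frac{z}{\tau};-\frac{1}{\tau}) = \nu_\eta^{-3}(S)\sqrt{\tau}\,\widetilde{\mu}(z,z;\tau)$. Writing $h_1(-1/\tau) = \widetilde{\mu}(\frac{1}{2},\frac{1}{2};-\frac{1}{\tau})$, set $z = \tau/2$ so that $z/\tau = 1/2$, giving $h_1(-1/\tau) = \nu_\eta^{-3}(S)\sqrt{\tau}\,h_2(\tau)$. Writing $h_2(-1/\tau) = \widetilde{\mu}(-\frac{1}{2\tau},-\frac{1}{2\tau};-\frac{1}{\tau})$, set $z = -1/2$ to get $h_2(-1/\tau) = \nu_\eta^{-3}(S)\sqrt{\tau}\,\widetilde{\mu}(-\frac{1}{2},-\frac{1}{2};\tau)$, and the elliptic shift by $1$ identifies the right hand side with $\nu_\eta^{-3}(S)\sqrt{\tau}\,h_1(\tau)$. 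For $h_3(-1/\tau) = \widetilde{\mu}(\frac{\tau-1}{2\tau},\frac{\tau-1}{2\tau};-\frac{1}{\tau})$, set $z = \frac{\tau-1}{2}$, apply the proposition, and use the elliptic shift by $1$ to convert $\widetilde{\mu}(\frac{\tau-1}{2},\frac{\tau-1}{2};\tau)$ into $h_3(\tau)$.

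There is no serious obstacle: the proof amounts to repeatedly matching the argument $\frac{z}{c\tau+d}$ to the correct value $\frac{1}{2}$, $\frac{\tau}{2}$, or $\frac{\tau+1}{2}$ and then absorbing any stray integer shift via the elliptic periodicity in $z$. The only point to be careful about is choosing the right translate of $z$ on the right hand side, so that after the elliptic shift the target lands precisely on one of $h_1$, $h_2$, $h_3$ rather than another representative in the same orbit.
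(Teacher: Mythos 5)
Your proof is correct and follows essentially the same route as the paper: the same choices of $z$ (namely $\tfrac{\tau}{2}$, $-\tfrac{1}{2}$, $\tfrac{\tau-1}{2}$ for the $S$-case) combined with the elliptic periodicity of $\widetilde{\mu}$ in $z$. The only difference is that you write out the $T$-case explicitly, which the paper dismisses as "analogous," and your computation there is also correct.
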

\begin{proof}
We only show the second statement. The first one follows
analogously. By means of the first transformation formula in
Proposition \ref{prop_jacobiform} applied to the matrix $S$ and
$z=\frac{\tau}{2}$ we find \[ h_1(-\tfrac{1}{\tau}) =
\widetilde{\mu}(\tfrac{1}{2},\tfrac{1}{2};-\tfrac{1}{\tau})=\widetilde{\mu}(\tfrac{z}{\tau},\tfrac{z}{\tau};-\tfrac{1}{\tau})=\nu_\eta^{-3}(S)\sqrt{\tau}
\widetilde{\mu}(z,z;\tau)=\nu_\eta^{-3}(S)\sqrt{\tau}h_2(\tau).\]
Similarly, using the same transformation formula for $S$ and
$z=-\frac{1}{2}$ we obtain \[ h_2(-\tfrac{1}{\tau}) =
\widetilde{\mu}\left(\tfrac{-\tfrac{1}{\tau}}{2},\tfrac{-\tfrac{1}{\tau}}{2};-\tfrac{1}{\tau}\right)=\widetilde{\mu}(\tfrac{z}{\tau},\tfrac{z}{\tau};-\tfrac{1}{\tau})=
\nu_\eta^{-3}(S)\sqrt{\tau}
\widetilde{\mu}(-\tfrac{1}{2},-\tfrac{1}{2};\tau).\] The second
transformation formula in Proposition \ref{prop_jacobiform} then
shows
\[ h_2(-\tfrac{1}{\tau})=\nu_\eta^{-3}(S)\sqrt{\tau}
\widetilde{\mu}(-\tfrac{1}{2},-\tfrac{1}{2};\tau)=\nu_\eta^{-3}(S)\sqrt{\tau}
\widetilde{\mu}(\tfrac{1}{2},\tfrac{1}{2};\tau)=\nu_\eta^{-3}(S)\sqrt{\tau}h_1(\tau).\]
Finally, applying the first transformation formula in Proposition
\ref{prop_jacobiform} to the matrix $S$ and $z=\frac{\tau-1}{2}$
yields
\begin{align*}
h_3(-\tfrac{1}{\tau})&=\widetilde{\mu}\left(\tfrac{-\tfrac{1}{\tau}+1}{2},\tfrac{-\tfrac{1}{\tau}+1}{2};-\tfrac{1}{\tau}\right)=\widetilde{\mu}(\tfrac{z}{\tau},\tfrac{z}{\tau};-\tfrac{1}{\tau})
=\nu_\eta^{-3}(S)\sqrt{\tau} \widetilde{\mu}(z,z;\tau)\\
&=\nu_\eta^{-3}(S)\sqrt{\tau}
\widetilde{\mu}(\tfrac{\tau-1}{2},\tfrac{\tau-1}{2};\tau)=\nu_\eta^{-3}(S)\sqrt{\tau}
\widetilde{\mu}(\tfrac{\tau+1}{2};\tfrac{\tau+1}{2}\tau)=\nu_\eta^{-3}(S)\sqrt{\tau}
h_3(\tau).
\end{align*}
\end{proof}

The lemma shows that $\mathbf{h}$ transforms as a vector valued
modular form. We will use this fact to derive the transformation
properties for the first component $h_1(\tau)$.

\begin{prop}\label{prop_modforh}
The function $h_1(\tau)$ is a harmonic weak Maass form of weight
$\tfrac{1}{2}$ with respect to the group $\Gamma_0(2)$ with
multiplier system $\nu_\eta^{-3}$.
\end{prop}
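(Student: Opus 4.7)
The plan is to verify, in turn, the three defining conditions for $h_1$ to be a harmonic weak Maass form of weight $\tfrac{1}{2}$ on $\Gamma_0(2)$ with multiplier $\nu_\eta^{-3}$: the modular transformation law, annihilation by $\Delta_\frac{1}{2}$, and at most linear exponential growth at the cusps. The bulk of the work lies in the transformation law; the other two follow almost for free from Zwegers' construction together with the series definitions of $\mu$ and $R$.

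To derive the transformation law I would exploit the vector-valued structure of Lemma \ref{lem_hmod}. Once the scalars $\nu_\eta^{-3}(T)$ and $\nu_\eta^{-3}(S)$ are factored out, the matrices $M_T$ and $M_S$ are the permutation matrices of the transpositions $(h_2,h_3)$ and $(h_1,h_2)$ respectively; these two transpositions generate an $S_3$ which realises the standard quotient $\SL_2(\Z)\to\SL_2(\Z/2\Z)\cong S_3$. Under this quotient $\Gamma_0(2)$ is the preimage of the stabilizer of $\infty\in\mathbb{P}^1(\mathbb{F}_2)$, and in the dictionary above $h_1$ is precisely the component attached to $\infty$. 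Hence for every $\gamma\in\Gamma_0(2)$ the vector-valued action leaves the first coordinate invariant, and by the cocycle property for $\nu_\eta^{-3}$ the accumulated scalar is forced to be $\nu_\eta^{-3}(\gamma)(c\tau+d)^{1/2}$. Concretely I would do the bookkeeping on the generators $T$ and $\gamma_0:=\begin{pmatrix}1&0\\2&1\end{pmatrix}=S T^{-2} S^{-1}$ of $\Gamma_0(2)$: the $T$-case is immediate from the first row of $M_T$, while for $\gamma_0$ one composes the chain $h_1\stackrel{S}{\longrightarrow}h_2\stackrel{T^{-2}}{\longrightarrow}h_2\stackrel{S^{-1}}{\longrightarrow}h_1$ and multiplies the three automorphy factors together.

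The remaining two conditions are comparatively cheap. Zwegers' completion $\widetilde{\mu}(u,v;\tau)$ was introduced precisely so that, with $(u,v)$ fixed, it is annihilated by $\Delta_\frac{1}{2}$ in $\tau$; specialising $u=v=\tfrac{1}{2}$ preserves this, giving $\Delta_\frac{1}{2} h_1 = 0$. For growth, $\Gamma_0(2)$ has two inequivalent cusps, $\infty$ and $0$. At $\infty$ one reads the $q$-expansion of $\mu(\tfrac{1}{2},\tfrac{1}{2};\tau)$ directly from its defining series (the factor $\theta(\tfrac{1}{2};\tau)$ being invertible in $\mathbb{Z}[[q]]$ up to a unit), and the Eichler-type integral representation of $R$ quoted in the theorem above shows that the non-holomorphic correction grows at most linearly exponentially. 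The cusp $0$ is reduced to $\infty$ via the $S$-transformation in Lemma \ref{lem_hmod}, which expresses $h_1(-1/\tau)$ in terms of $h_2(\tau)$, and the same analysis applies. The step I expect to cost the most care is the $\gamma_0=S T^{-2} S^{-1}$ calculation: the three half-integer-weight transformations produce a product of square roots that must be identified on the correct branch with $(2\tau+1)^{1/2}$ up to the scalar $\nu_\eta^{-3}(\gamma_0)$, without a spurious sign creeping in from a branch cut. Once the two generators are handled, the transformation law for arbitrary $\gamma\in\Gamma_0(2)$ follows from multiplicativity of the representation in Lemma \ref{lem_hmod} and the cocycle identity for $\nu_\eta^{-3}$.
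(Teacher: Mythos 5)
Your proposal is correct and follows essentially the same route as the paper: the transformation law is checked on explicit generators of $\Gamma_0(2)$ (the paper uses $-I$, $T$, and $ST^2S$; you use $T$ and $ST^{-2}S^{-1}$, which indeed suffice since $(T\cdot(ST^{-2}S^{-1})^{-1})^2=-I$) via the vector-valued Lemma \ref{lem_hmod}, while annihilation by $\Delta_{\frac{1}{2}}$ is quoted from Zwegers and the growth condition is read off from the Fourier expansions. Your $S_3\cong\SL_2(\Z/2\Z)$ gloss identifying $h_1$ with the component stabilized by $\Gamma_0(2)$ is a pleasant conceptual repackaging of the same generator bookkeeping.
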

\begin{proof}
Using Lemma \ref{lem_hmod} one readily sees that under any of the
following transformations $-I=\left(%
\begin{smallmatrix}
  -1 & 0 \\
  0 & -1 \\
\end{smallmatrix}%
\right)$, $T=\left(%
\begin{smallmatrix}
  1 & 1 \\
  0 & 1\\
\end{smallmatrix}%
\right),$ and $ST^2S=\left(%
\begin{smallmatrix}
  -1 & 0 \\
  2 & -1 \\
\end{smallmatrix}%
\right)$ the function $h_1$ is mapped to a constant multiple of
itself, where the factor is given by the multiplier system
$\nu_\eta^{-3}$. It is easy to see that the group generated by
$-I,T,ST^2S$ is $\Gamma_0(2)$.\\
The growth condition for $h_1(\tau)$ can be deduced from its Fourier
expansion which we give in the next section. Furthermore it follows
from Zwegers' results (see Proposition 4.2 in \cite{zwegersthesis})
that $h_1(\tau)$ is annihilated $\Delta_\frac{1}{2}$.
\end{proof}

It is important to understand how $h_1(\tau)$ transforms under all
the other elements of $\SL_2(\Z)$.

\begin{prop}\label{prop_cesaromap}
Let $M=\left(%
\begin{smallmatrix}
  a & b \\
  c & d \\
\end{smallmatrix}%
\right) \in \SL_2(\Z)$. Then
$\frac{1}{\sqrt{c\tau+d}}h_1\left(\frac{a\tau+b}{c\tau+d}\right)$ is
a constant multiple of
\begin{enumerate}
\item $h_1(\tau)$, if $c$ is even and $d$ is odd,
\item $h_2(\tau)$, if $c$ is odd and $d$ is even,
\item $h_3(\tau)$, if $c$ and $d$ are odd.
\end{enumerate}
\end{prop}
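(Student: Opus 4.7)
The plan is to combine Proposition \ref{prop_modforh} with Lemma \ref{lem_hmod} via a coset decomposition of $\SL_2(\Z)$ modulo $\Gamma_0(2)$. Since $[\SL_2(\Z) : \Gamma_0(2)] = 3$, choosing three coset representatives reduces the proposition to three essentially computational cases, one per parity class of the bottom row of $M$.

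First I would verify that the parity class of $(c,d) \bmod 2$ is a well-defined invariant on the right coset space $\Gamma_0(2)\backslash\SL_2(\Z)$: if $M' = g M$ with $g \in \Gamma_0(2)$, say with bottom row $(\gamma,\delta) \equiv (0,1) \pmod{2}$, then the bottom row of $M'$ equals $(\gamma a + \delta c,\, \gamma b + \delta d) \equiv (c,d) \pmod{2}$. The three admissible parity patterns are $(0,1)$, $(1,0)$, $(1,1)$, since the pattern $(0,0)$ is excluded by $ad-bc=1$, which matches the index $3$. A short direct check shows that $I$, $S$, and $ST$ realize these three parity classes respectively, so every $M$ factors as $M = gR$ with $g \in \Gamma_0(2)$ and $R \in \{I, S, ST\}$.

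The three cases then follow by successive application of the transformation laws. If $R = I$, then $M \in \Gamma_0(2)$ and Proposition \ref{prop_modforh} immediately yields that $h_1(M\tau)/\sqrt{c\tau+d}$ is a constant multiple of $h_1(\tau)$. If $R = S$, applying Proposition \ref{prop_modforh} to the $g$-factor reduces the question to the transformation of $h_1$ under $S$, which by the $S$-row of Lemma \ref{lem_hmod} is a constant multiple of $h_2$. If $R = ST$, one is reduced to the transformation of $h_1$ under $ST$, and two applications of Lemma \ref{lem_hmod} handle this: first the $S$-transformation sends $h_1$ to a constant multiple of $h_2$, and then the $T$-transformation sends $h_2$ to a constant multiple of $h_3$. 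I do not foresee a serious obstacle, as the argument is essentially bookkeeping of multiplier constants and of the cocycle factors $\sqrt{c\tau+d}$ under composition; only the existence of the constants, not their specific values, is needed here. The only point requiring care is matching the direction of the coset action to the way Lemma \ref{lem_hmod} mixes the components of $\mathbf{h}$.
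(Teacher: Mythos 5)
Your proof is correct and takes essentially the same approach as the paper: the paper likewise factors $M$ as an element of $\Gamma_0(2)$ times one of $I$, $S$, $ST$ (writing $M=(-MS)S$ resp.\ $M=(-MT^{-1}S)ST$ explicitly) and then applies Proposition \ref{prop_modforh} and Lemma \ref{lem_hmod} exactly as you do. Your coset-invariance argument for the parity of $(c,d)$ is merely a more structural phrasing of the paper's direct matrix check.
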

\begin{proof}
The first statement follows from Proposition \ref{prop_modforh}. If
$c$ is odd and $d$ is even, then
\[ M=\left(%
\begin{smallmatrix}
  a & b \\
  c & d \\
\end{smallmatrix}%
\right)=-\left(%
\begin{smallmatrix}
  a & b \\
  c & d \\
\end{smallmatrix}%
\right)SS=\left(%
\begin{smallmatrix}
  -b & a \\
  -d & c \\
\end{smallmatrix}%
\right)S=(-MS)S\] and we see that $-MS \in \Gamma$. Now the result
follows from Lemma \ref{lem_hmod}. Analogously, if both $c$ and $d$
are odd, then we find $M=(-MT^{-1}S)ST$ and $-MT^{-1}S \in \Gamma$
and the result follows from Lemma \ref{lem_hmod}.
\end{proof}
\subsection{Decomposition into holomorphic and non-holomorphic part and Fourier expansions}
In this section we will see how $\our$ is related to
$\mu(\tfrac{1}{2},\tfrac{1}{2};\tau)$. It follows from Zwegers'
results that
\[ \widetilde{\mu}\left(\tfrac{1}{2},\tfrac{1}{2};\tau\right)=\mu\left(\tfrac{1}{2},\tfrac{1}{2};\tau\right)-\frac{i}{2}\int_{-\overline{\tau}}^{i \infty} \frac{g_{\frac{1}{2},\frac{1}{2}}(t)}{\sqrt{-i(t+\tau)}}dt.\]

From this representation we can easily deduce the Fourier expansion
of the non-holomorphic part of
$\widetilde{\mu}(\tfrac{1}{2},\tfrac{1}{2};\tau)$.

\begin{prop}\label{prop_nonholmorphicofh}
The function $\tilde{\mu}(\tfrac{1}{2},\tfrac{1}{2};\tau)$ has the
following decomposition into a holomorphic and a non-holomorphic
part:
\[ \widetilde{\mu}\left(\tfrac{1}{2},\tfrac{1}{2};\tau\right)=\mu\left(\tfrac{1}{2},\tfrac{1}{2};\tau\right)+  \frac{i\sqrt{2}}{4 \sqrt{\pi}} \sum_{n \in \Z} (-1)^n q^{-\frac{(2n+1)^2}{4}} \Gamma\left(\tfrac{1}{2},\pi(2n+1)^2y\right).\]
\end{prop}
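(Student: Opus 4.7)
The plan is to compute the integral
\[-\frac{i}{2}\int_{-\overline{\tau}}^{i\infty} \frac{g_{1/2,1/2}(t)}{\sqrt{-i(t+\tau)}}\,dt\]
explicitly, term by term. Since $\mu(\tfrac{1}{2},\tfrac{1}{2};\tau)$ is manifestly holomorphic in $\tau$, whatever this integral evaluates to is, by definition, the non-holomorphic part of $\widetilde{\mu}(\tfrac{1}{2},\tfrac{1}{2};\tau)$.

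First I would insert the defining series of $g_{1/2,1/2}$ given earlier in the excerpt. Writing $\nu=n+\tfrac{1}{2}$ and simplifying the phase via $e^{\pi i(n+1/2)}=(-1)^n i$, the series collapses to $i\sum_{n\in\Z}(-1)^n(n+\tfrac{1}{2})e^{\pi i(n+1/2)^2 t}$. The rapid Gaussian-type decay of each summand along the contour from $-\overline{\tau}$ to $i\infty$ justifies swapping sum and integral (dominated convergence), so the task reduces to evaluating, for each $\lambda:=(n+\tfrac{1}{2})^2>0$, the single model integral $\int_{-\overline{\tau}}^{i\infty}e^{\pi i\lambda t}/\sqrt{-i(t+\tau)}\,dt$.

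The key step is the substitution $s=-i(t+\tau)$. This straightens the contour to the positive real ray $[2y,\infty)$, turns $\sqrt{-i(t+\tau)}=\sqrt{s}$ into a genuine real square root, and cleanly separates the $\tau$-dependence: the factor $e^{-\pi i\lambda\tau}$ (which will become the ``holomorphic-looking'' $q$-power) exits the integral, while what remains is the real integral $\int_{2y}^{\infty}s^{-1/2}e^{-\pi\lambda s}\,ds$. A further rescaling $r=\pi\lambda s$ identifies this with $(\pi\lambda)^{-1/2}\,\Gamma(\tfrac{1}{2},2\pi\lambda y)$, giving each model integral in closed form.

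The last step is pure bookkeeping: reassemble the factors, observe that $\sqrt{\lambda}=|n+\tfrac{1}{2}|$ produces an unwanted factor $\mathrm{sgn}(n+\tfrac{1}{2})$, and absorb it by pairing the $n$-th and $(-n-1)$-st terms (an involution under which both $(-1)^n\mathrm{sgn}(n+\tfrac{1}{2})$ and $(n+\tfrac{1}{2})^2$ are invariant). Rewriting $(n+\tfrac{1}{2})^2=(2n+1)^2/4$ and collecting constants then brings the sum into the claimed shape. The main (and essentially only) obstacle is careful tracking of the various factors of $i$, signs, and powers of $\pi$ so as to land on precisely the prefactor $\tfrac{i\sqrt{2}}{4\sqrt{\pi}}$; nothing in the argument is conceptually hard once Zwegers' integral representation of $R$ is in hand.
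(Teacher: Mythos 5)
Your overall strategy --- inserting the series for $g_{\frac12,\frac12}$ into Zwegers' period integral, interchanging sum and integral, and evaluating each term via the substitution $s=-i(t+\tau)$ --- is the right one, and indeed the only reasonable one (the paper supplies no proof of this Proposition at all). Your steps are correct up to and including the model integral: one gets
\[
\int_{-\overline{\tau}}^{i\infty}\frac{e^{\pi i\lambda t}}{\sqrt{-i(t+\tau)}}\,dt
= i\,e^{-\pi i\lambda\tau}\,(\pi\lambda)^{-1/2}\,\Gamma\bigl(\tfrac12,2\pi\lambda y\bigr),
\qquad \lambda=(n+\tfrac12)^2 .
\]
The gap is in the final ``bookkeeping'' step, which you assert lands on the displayed formula but which in fact cannot. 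First, $e^{-\pi i\lambda\tau}=q^{-\lambda/2}=q^{-(2n+1)^2/8}$, not $q^{-(2n+1)^2/4}$, and likewise $\Gamma(\tfrac12,2\pi\lambda y)=\Gamma\bigl(\tfrac12,\pi(2n+1)^2y/2\bigr)$, not $\Gamma\bigl(\tfrac12,\pi(2n+1)^2y\bigr)$; to reach the printed exponents you would have to trade $e^{\pi i(\cdot)}$ for $e^{2\pi i(\cdot)}$ somewhere. Second, your handling of the sign is backwards: the summand \emph{including} $(-1)^n\mathrm{sgn}(n+\tfrac12)$ is invariant under $n\mapsto-n-1$, so you may fold the sum to $n\ge 0$ (gaining a factor $2$) or retain the $\mathrm{sgn}$, but you may not simply delete the $\mathrm{sgn}$ from a bilateral sum --- without it the summand is \emph{anti}-invariant under the involution and the bilateral sum, as printed in the Proposition, is identically zero.

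Carried out honestly, your own outline yields
\[
-\frac{i}{2}\int_{-\overline{\tau}}^{i\infty}\frac{g_{\frac12,\frac12}(t)}{\sqrt{-i(t+\tau)}}\,dt
=\frac{i}{2\sqrt{\pi}}\sum_{n\in\Z}(-1)^n\,\mathrm{sgn}(2n+1)\,q^{-\frac{(2n+1)^2}{8}}\,\Gamma\Bigl(\tfrac12,\tfrac{\pi(2n+1)^2y}{2}\Bigr),
\]
which one can cross-check against the definition of $R(0;\tau)$ via $\mathrm{sgn}(\nu)-E(\nu\sqrt{2y})=\mathrm{sgn}(\nu)\beta(2\nu^2y)$, and against the requirement that all Fourier exponents of $h_1$ lie in $-\tfrac18+\Z$ (forced by the transformation under $\tau\mapsto\tau+1$ in Lemma \ref{lem_hmod}; note $-(2n+1)^2/4\equiv-\tfrac14\pmod{1}$ fails this test). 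So the discrepancy lies in the Proposition's display itself, and by claiming your constants ``land on precisely'' the stated prefactor you have papered over exactly the step where it lives. Be aware that the support of the non-holomorphic part is used later: the vanishing argument in Proposition \ref{prop_cesarosieve} works modulo $8$ with the exponent $-2(2X+1)^2/8$, and with the corrected exponent $-(2X+1)^2/8$ one must instead rule out $-(2X+1)^2\equiv 7\pmod{24}$ (and the analogous congruences mod $56$), which still succeeds because odd squares are $\equiv 1$ or $9\pmod{24}$.
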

We next turn to the Fourier expansion of the holomorphic part.
Indeed, we require the Fourier expansions of the holomorphic parts
of $h_1,h_2,$ and $h_3$. It is clear that these are
$\mu(\frac{1}{2},\frac{1}{2};\tau),\mu(\frac{\tau}{2},\frac{\tau}{2};\tau),$
and $\mu(\frac{\tau+1}{2},\frac{\tau+1}{2};\tau)$, respectively.

\begin{prop}\label{prop_relationtocesaro}
\begin{enumerate}
\item We have
\[\mu(\tfrac{1}{2},\tfrac{1}{2};\tau)=-\frac{i}{\sum_{n \in
\Z}q^{\frac{1}{2}(n+\frac{1}{2})^2}}\sum_{n \in \Z}
\frac{q^{\frac{1}{2}n(n+1)}}{1+q^n}.\]
 Consequently the Fourier
expansion of $\mu(\tfrac{1}{2},\tfrac{1}{2};\tau)$ starts with
\[ -\frac{i}{4} q^{-\frac{1}{8}}- \frac{3i}{4}q^\frac{7}{8}+\cdots.\]
Furthermore we have
\[ \mu(\tfrac{1}{2},\tfrac{1}{2};\tau)=\frac{1}{4i} q^{-\frac{1}{8}}\our(\tau). \]
\item We have
\[\mu(\tfrac{\tau}{2},\tfrac{\tau}{2};\tau)=-\frac{iq^\frac{1}{4}}{\sum_{n
\in \Z} (-1)^nq^{\frac{1}{2}(n^2+2n+1)}} \sum_{n \in \Z}
\frac{(-1)^n q^{\frac{1}{2}n(n+2)}}{1-q^{n+\frac{1}{2}}}.\]
Consequently the Fourier expansion of the holomorphic part of $h_2$
starts as
\[\mu(\tfrac{\tau}{2},\tfrac{\tau}{2};\tau)=2iq^\frac{1}{4}+6iq^\frac{3}{4}+\cdots.\]
\item We have
\[
\mu(\tfrac{\tau+1}{2},\tfrac{\tau+1}{2};\tau)=-\frac{iq^\frac{1}{4}}{\sum_{n
\in \Z} q^{\frac{1}{2}\pi i (n^2+2n +1) }} \sum_{n \in \Z} \frac{
q^{\frac{1}{2}n(n+2)}}{1+q^{n+\frac{1}{2}}}.\] Consequently the
Fourier expansion of the holomorphic part of $h_3$ starts with
\[ \mu(\tfrac{\tau+1}{2},\tfrac{\tau+1}{2};\tau)=-2iq^\frac{1}{4}+6iq^\frac{3}{4}+\cdots.\]
\end{enumerate}
\end{prop}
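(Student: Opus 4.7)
The strategy for all three parts is to substitute the specific values of $(u,v)$ into Zwegers' defining series
\[ \mu(u,v;\tau)=\frac{e^{\pi i u}}{\theta(v;\tau)} \sum_{n \in \Z} \frac{(-1)^n e^{\pi i (n^2+n)\tau+2 \pi i nv}}{1-e^{2 \pi i n \tau +2 \pi i u}}, \]
simplify the exponentials $e^{\pi i u}$, $e^{2\pi i nv}$, $e^{2\pi i u}$ appearing in the prefactor and the summand, and separately evaluate $\theta(v;\tau)$ at each specialization.

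For part (1), $u=v=\tfrac{1}{2}$ yields $e^{\pi i u}=i$, the signs $(-1)^n e^{\pi i n}=1$ collapse, and $1-e^{2\pi i n\tau+\pi i}=1+q^n$, giving the Lambert sum $\sum_n q^{n(n+1)/2}/(1+q^n)$. Since $e^{2\pi i\nu}=-1$ for every $\nu\in\tfrac{1}{2}+\Z$, the theta factor reduces to $\theta(\tfrac{1}{2};\tau)=-\sum_n q^{(n+1/2)^2/2}$, proving the first identity. To link this with $\our$, I would apply Jacobi's triple product in the form $\sum_n q^{(n+1/2)^2/2}=2q^{1/8}\prod_{n\ge 1}(1-q^{2n})^2/(1-q^n)$ and combine with $\prod(1-q^{2n-1})/(1-q^{2n})=\prod(1-q^n)/\prod(1-q^{2n})^2$ and the product-times-sum representation of $\our$ from the introduction, obtaining $\mu(\tfrac{1}{2},\tfrac{1}{2};\tau)=\tfrac{1}{4i}q^{-1/8}\our(\tau)$. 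The stated leading Fourier coefficients then follow from the expansion $\our=1+3q+O(q^2)$, read off from the defining $\sum(-1)^n(q;q^2)_n/(-q;q)_n^2$ series.

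For parts (2) and (3), the substitutions $u=v=\tfrac{\tau}{2}$ and $u=v=\tfrac{\tau+1}{2}$ produce prefactors $q^{1/4}$ and $iq^{1/4}$, summand denominators $1\mp q^{n+1/2}$, and summand numerators $(-1)^n q^{n(n+2)/2}$ or $q^{n(n+2)/2}$, matching the stated formulas. The theta specializations $\theta(\tfrac{\tau}{2};\tau)$ and $\theta(\tfrac{\tau+1}{2};\tau)$ are evaluated by completing the square $\nu(\nu+1)=(n+1)^2-\tfrac{1}{4}$ for $\nu=n+\tfrac{1}{2}$ and tracking the remaining exponentials $e^{\pi i\nu}=i(-1)^n$ or $e^{2\pi i\nu}=-1$; the resulting theta series are $\sum_n(-1)^n q^{(n+1)^2/2}$ and $\sum_n q^{(n+1)^2/2}$ respectively, each multiplied by an overall power of $q$ and a sign. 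For the Fourier expansion starts, I would invert the theta denominator as a formal $q$-series (its leading term is a nonzero constant, so inversion is straightforward) and expand the Lambert sum, regularizing negative-index summands via the formal identity $1/(1-q^{-k-1/2})=-q^{k+1/2}/(1-q^{k+1/2})$ before reading off the first two nonzero terms.

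The main obstacle is bookkeeping: tracking the signs, the roots of unity $i$, and the various fractional $q$-power shifts arising in the theta specializations, together with the correct handling of the Lambert sums' negative-index terms as formal power series. Once these have been organized carefully the three formulas drop out uniformly, and the Fourier expansions are then just a matter of multiplying out the leading terms of the inverted theta series and the regularized Lambert sum.
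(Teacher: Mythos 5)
Your proposal is correct and follows the paper's own proof essentially verbatim: direct substitution of the special values of $(u,v)$ into Zwegers' series, explicit evaluation of the specialized theta functions, and the product expansion of $\theta(\tfrac{1}{2};\tau)$ (your triple-product identity is an equivalent reformulation of it) to identify $\mu(\tfrac{1}{2},\tfrac{1}{2};\tau)$ with $\tfrac{1}{4i}q^{-1/8}\our(\tau)$. Two small cautions: the expansion $\our(\tau)=1+3q+\cdots$ is more safely read off from the Lambert-series representation you have just derived than from the defining alternating series (which converges only in the Cesaro sense), and if you actually carry out the theta bookkeeping in parts (2) and (3) you will find the overall prefactor is $q^{3/8}$ rather than the $q^{1/4}$ printed in the statement --- consistent with the relation $h_2(\tau+2)=-i\,h_2(\tau)$ forced by Lemma \ref{lem_hmod} --- so those expansions actually begin $2iq^{3/8}+6iq^{7/8}+\cdots$ and $-2iq^{3/8}+6iq^{7/8}+\cdots$.
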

\begin{proof}
We find that
\[\mu(\tfrac{1}{2},\tfrac{1}{2};\tau)=\frac{e^{\pi i
\frac{1}{2}}}{\theta(\frac{1}{2};\tau)} \sum_{n \in \Z} \frac{(-1)^n
e^{\pi i (n^2+n)\tau+\pi i n}}{1-e^{2 \pi i n \tau +\pi
i}}=\frac{i}{\theta(\frac{1}{2};\tau)} \sum_{n \in \Z} (-1)^{2n}
\frac{q^{\frac{1}{2}n(n+1)}}{1+q^n}.\]

For the $\theta$-function we obtain the following series expansion
\[ \theta(\tfrac{1}{2};\tau)=\sum_{n \in \Z} e^{\pi i (n+\frac{1}{2})^2 \tau+2\pi i (n+\frac{1}{2})(\frac{1}{2}+\frac{1}{2})}=
-\sum_{n \in \Z} e^{\pi i (n+\frac{1}{2})^2 \tau}=-\sum_{n \in
\Z}q^{\frac{1}{2}(n+\frac{1}{2})^2}.\]

Using the well known product expansion for the $\theta$-function
\[ \theta(v;\tau)=-ie^\frac{ \pi i \tau}{4} e^{-\pi i v}\prod_{n=1}^\infty \left(1-e^{2 \pi i n\tau}\right)\left(1-e^{2 \pi i v}e^{2 \pi i (n-1)\tau}\right)\left(1-e^{-2 \pi i v}e^{2 \pi i n \tau}\right),\]
 we can also write
\[ \theta(\tfrac{1}{2};\tau)=-2q^\frac{1}{8}\prod_{n=1}^\infty
\frac{1-q^{2n}}{1-q^{2n-1}}.
\]
This proves the relation to $\our$. For the holomorphic parts of
$h_2$ and $h_3$ we can derive the Fourier expansions in a similar
manner.
\end{proof}
\subsection{Sieving out residue classes} Since the holomorphic part
of $h_1(\tau)$ is (up to a constant) $q^{-\frac{1}{8}}\our(\tau)$ we
find that the coefficients $a_\our(3n+1)$ correspond to the
coefficients of $h_1(\tau)$ with exponent $24n+7$ in the Fourier
expansion given in terms of $q^\frac{1}{8}$. Similarly, we see that
$a_\our(7n+2),a_\our(7n+3)$, and $a_\our(7n+5)$ correspond to the
Fourier coefficients with exponent $56n+15,56n+23$ and $56n+39$.\\
For a harmonic weak Maass form with Fourier expansion
\[ f(\tau)=\sum_{n \in \Z} a_n(y) q^\frac{n}{\wid} \]
with $\wid \in \N$ and for $r,m \in \Z$ we define the sieve operator
by
\[ U_{r,m} f (\tau):= \sum_{n \equiv r \imod{m}} a_n(y) q^\frac{n}{\wid}.\]

For harmonic weak Maass forms with respect to the
$\theta$-multiplier one can show in a standard manner that
$U_{r,m}f$ again is a harmonic weak Maass form. This result may be
generalized to other multiplier systems. We only give the statement
for the multiplier system $\nu_\eta^{-3}$, which will be needed in
this paper.

\begin{prop}\label{prop_sieveroots}
Let $f$ be a harmonic weak Maass form of weight $\frac{1}{2}$ for
$\Gamma_0(2)$ with respect to $\nu_\eta^{-3}$. Suppose that $f$ has
a Fourier expansion in terms of $q^\frac{1}{8}$. Let $r,m \in \N$.
Then $U_{r,m} f$ is a harmonic weak Maass form of weight
$\frac{1}{2}$ with respect to the congruence subgroup
\[ \Gamma:=\left\{\left. \left(%
\begin{smallmatrix}
  a & b \\
  c & d \\
\end{smallmatrix}%
\right) \in \SL_2(\Z)\right|a,d \text{ coprime to } m, a \equiv d
\pmod{m} \text{ and }2m^2|c\right\}.\] We have $\Gamma_1(2m^2) \leq
\Gamma \leq \Gamma_0(2m^2)$ and
\[ \left[\SL_2(\Z):\Gamma\right]=\frac{2m^4\varphi(m)}{\varphi(2m^2)}\prod_{p|2m^2}\left(1-\tfrac{1}{p^2}\right),\]
where $\varphi$ is Euler's $\varphi$-function.
\end{prop}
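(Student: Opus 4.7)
The plan is to write $U_{r,m}f$ as an explicit finite average of translations of $f$ and then verify the three defining properties of a harmonic weak Maass form termwise. Orthogonality of characters on $\Z/m\Z$, combined with the expansion of $f$ in $q^{1/8}$, gives the identity
\[
U_{r,m}f(\tau) = \frac{1}{m}\sum_{j=0}^{m-1} e^{-2\pi i jr/m}\, f\!\left(\tau + \tfrac{8j}{m}\right).
\]
Both harmonicity and the cusp growth condition come essentially for free: $\Delta_{\frac{1}{2}}$ commutes with real translations, so each $f(\tau + 8j/m)$ is annihilated by $\Delta_{\frac{1}{2}}$; and the linear-exponential growth of $f$ at each cusp of $\Gamma_0(2)$ is preserved under real translations and finite averages.

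The substance of the argument is the modular transformation for $\gamma = \left(\begin{smallmatrix} a & b \\ c & d\end{smallmatrix}\right) \in \Gamma$. Setting $M_j := \left(\begin{smallmatrix} 1 & 8j/m \\ 0 & 1\end{smallmatrix}\right)$, I would form the conjugate
\[
\gamma_j' := M_j\gamma M_j^{-1} = \begin{pmatrix} a + 8jc/m & b + 8j(d-a)/m - 64j^2 c/m^2 \\ c & d - 8jc/m \end{pmatrix}.
\]
The three conditions defining $\Gamma$ are precisely what is needed to force $\gamma_j' \in \Gamma_0(2)$: $2m^2 \mid c$ clears the denominators $c/m$ and $c/m^2$, while $a\equiv d\pmod m$ makes $(d-a)/m$ an integer (the condition $\gcd(a,m)=1$ is automatic from $ad\equiv1\pmod{2m^2}$). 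The identity $\gamma\tau + 8j/m = \gamma_j'(\tau + 8j/m)$, the transformation of $f$ under $\gamma_j'$, and the cancellation $c(\tau + 8j/m) + (d - 8jc/m) = c\tau + d$ independent of $j$ yield
\[
U_{r,m}f(\gamma\tau) = \frac{(c\tau+d)^{1/2}}{m}\sum_{j=0}^{m-1} e^{-2\pi i jr/m}\, \nu_\eta^{-3}(\gamma_j')\, f\!\left(\tau + \tfrac{8j}{m}\right).
\]

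The hard part will be showing that $\nu_\eta^{-3}(\gamma_j')$ is independent of $j$, since only then does the right-hand side collapse to $\nu_\eta^{-3}(\gamma)(c\tau+d)^{1/2} U_{r,m}f(\tau)$ and exhibit modularity with multiplier $\nu_\eta^{-3}|_\Gamma$. Because $M_j\notin\SL_2(\Z)$, this is not automatic $\SL_2(\Z)$-conjugacy invariance. My plan is to invoke the Dedekind-sum formula $\nu_\eta(\gamma) = \exp\bigl(\pi i[(a+d)/(12c) - s(d,c) - \tfrac14]\bigr)$ for $c>0$, observe that of the three terms only $s(d,c)$ depends on $j$ (through $d\mapsto d - 8jc/m$), and then show that
\[
s\!\left(d - \tfrac{8jc}{m},\, c\right) - s(d,c) \in \tfrac{2}{3}\Z,
\]
so that after raising to the $(-3)$-rd power the $j$-dependent phase becomes trivial. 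A small test case ($m=3$, $\gamma = \left(\begin{smallmatrix} 1 & 0 \\ 18 & 1\end{smallmatrix}\right)$) confirms this: although $s(1,18)=34/27$, $s(7,18)=s(13,18)=-2/27$ differ among themselves, all three $\gamma_j'$ give $\nu_\eta^{-3}(\gamma_j') = i$.

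Once modularity is established, the inclusion $\Gamma_1(2m^2) \leq \Gamma \leq \Gamma_0(2m^2)$ is immediate from the definition. For the index formula I would use the short exact sequence $1\to\Gamma_1(2m^2)\to\Gamma_0(2m^2)\to(\Z/2m^2\Z)^\times\to 1$: under this, the image of $\Gamma$ is $\{\bar a\in (\Z/2m^2\Z)^\times : a^2\equiv 1\pmod m\}$ (since $ad\equiv 1\pmod{2m^2}$ and $a\equiv d\pmod m$ together force $a^2\equiv 1\pmod m$), and one then combines the count of such residues with the standard formula $[\SL_2(\Z):\Gamma_0(2m^2)] = 2m^2\prod_{p\mid 2m^2}(1+1/p)$ to obtain the asserted expression.
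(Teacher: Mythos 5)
The paper offers no proof of this proposition at all (it only remarks that the analogue for the $\theta$-multiplier "can be shown in a standard manner"), so your argument has to stand on its own. Its architecture is the right one: the averaging identity for $U_{r,m}$, the conjugation $\gamma_j'=M_j\gamma M_j^{-1}$, the observation that $2m^2\mid c$ and $a\equiv d\pmod m$ are exactly what put $\gamma_j'$ in $\Gamma_0(2)$, and the cancellation of automorphy factors are all correct. But the one statement carrying the entire content of the modularity claim --- that $\nu_\eta^{-3}(\gamma_j')$ does not depend on $j$ --- is not proved; it is reduced to the assertion $s(d-8jc/m,c)-s(d,c)\in\tfrac{2}{3}\Z$ and then supported by a single numerical instance. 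Your own example shows $\nu_\eta(\gamma_j')$ itself is \emph{not} constant in $j$, so nothing soft (conjugacy invariance of the multiplier, say) can close this: you need a genuine Dedekind-sum congruence exploiting $2m^2\mid c$ and $a\equiv d\pmod m$. A workable route is Rademacher's integer-valued $\Phi(\gamma)=\frac{a+d}{c}-12s(d,c)$, for which $\nu_\eta^{-3}(\gamma)=e^{3\pi i/4}e^{-\pi i\Phi(\gamma)/4}$, so the claim becomes $\Phi(\gamma_j')\equiv\Phi(\gamma)\pmod 8$; this can be attacked via Rademacher's congruences for $\Phi$ modulo $8$ and $3$, or by transporting the known $\theta$-multiplier statement through $\eta^3(8\tau)=\sum_{n\ge0}(-1)^n(2n+1)q^{(2n+1)^2}$. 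As written, this step is a conjecture checked at one data point, not a proof.

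The index computation also does not deliver what you claim. Your identification of the image of $\Gamma$ in $(\Z/2m^2\Z)^\times$ as $\{x:x^2\equiv1\pmod m\}$ is correct, and $\Gamma$ is the full preimage of that set in $\Gamma_0(2m^2)$; but counting gives $[\SL_2(\Z):\Gamma]=2m^2\varphi(m)N_m^{-1}\prod_{p\mid2m^2}(1+\tfrac1p)$ with $N_m=\#\{x\in(\Z/m\Z)^\times:x^2=1\}$, whereas the asserted formula simplifies to $m^2\varphi(m)\prod_{p\mid2m^2}(1+\tfrac1p)$. These agree only when $N_m=2$. For $m=24$ --- the case the paper actually uses, quoting index $9216$ --- every unit mod $24$ squares to $1$, so the condition $a\equiv d\pmod{24}$ is automatic in $\Gamma_0(1152)$, the group as literally defined is all of $\Gamma_0(1152)$, and its index is $2304$. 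So "one obtains the asserted expression" is false for your (correct) setup; you should either prove the index of the group as defined and flag the discrepancy with the stated formula, or identify the smaller group the formula is presumably meant to describe. (The discrepancy is harmless for the later Sturm bounds, since overestimating the index only weakens the bound, but it is a real gap in a proof of the proposition as stated.)
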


If we apply this result to the Fourier coefficients of $h_1$ we are
interested in we obtain the following result.

\begin{prop}\label{prop_cesarosieve}
\begin{enumerate}
\item The function \[ \our_{7(24)}(\tau):=\sum_{n \equiv 7 \imod{24}}
a_\our(n)q^\frac{n}{8}\] is a weakly holomorphic modular form of
weight $\frac{1}{2}$ with respect to $\nu_\eta^{-3}$ for some
subgroup $\Gamma$ of $\SL_2(\Z)$ which contains $\Gamma_1(1152)$ and
has index $9216$ in $\SL_2(\Z)$.
\item The function \[ \our_{15,23,39(56)}(\tau):=\sum_{n \equiv 15,23,39 \imod{56}}
a_\our(n)q^\frac{n}{8}\] is a weakly holomorphic modular form of
weight $\frac{1}{2}$ with respect to $\nu_\eta^{-3}$ for some
subgroup $\Gamma'$ of $\SL_2(\Z)$ which contains $\Gamma_1(6272)$
and has index $129024$ in $\SL_2(\Z)$.
\end{enumerate}
\end{prop}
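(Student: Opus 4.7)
The plan is to derive both statements by applying Proposition \ref{prop_sieveroots} to the harmonic weak Maass form $h_1$ from Proposition \ref{prop_modforh}, and then verifying that the non-holomorphic part of $h_1$ is annihilated by the relevant sieve operator.

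For part (1), I would apply the single sieve $U_{7,24}$ to $h_1$; for part (2), I would form the sum $U_{15,56}h_1 + U_{23,56}h_1 + U_{39,56}h_1$, noting that all three sieves use the same modulus $m=56$ and hence land in the same congruence subgroup. Proposition \ref{prop_sieveroots} then directly produces the subgroups $\Gamma$ and $\Gamma'$ containing $\Gamma_1(1152)$ and $\Gamma_1(6272)$, respectively. The quoted indices follow from plugging into the index formula: for $m=24$, $\varphi(24)=8$ and $\varphi(1152)=384$ (since $1152=2^{7}\cdot 3^{2}$), with $\prod_{p\mid 1152}(1-p^{-2})=\tfrac{3}{4}\cdot\tfrac{8}{9}=\tfrac{2}{3}$, giving $9216$; for $m=56$, $\varphi(56)=24$ and $\varphi(6272)=2688$ (since $6272=2^{7}\cdot 7^{2}$), with $\prod_{p\mid 6272}(1-p^{-2})=\tfrac{3}{4}\cdot\tfrac{48}{49}$, giving $129024$.

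The essential step is showing that the sieves kill the non-holomorphic piece of $h_1$ recorded in Proposition \ref{prop_nonholmorphicofh}. Its Fourier exponents are $-\tfrac{(2n+1)^{2}}{4}$, i.e.\ $-2(2n+1)^{2}$ when rewritten in the $q^{1/8}$ expansion. I would reduce these residues modulo $24$ and modulo $56$. Since odd squares satisfy $(2n+1)^{2}\equiv 1\pmod{8}$ and $(2n+1)^{2}\in\{0,1\}\pmod{3}$, the Chinese remainder theorem yields $(2n+1)^{2}\in\{1,9\}\pmod{24}$, so $-2(2n+1)^{2}\in\{22,6\}\pmod{24}$, which never equals $7$. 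Analogously, odd squares modulo $7$ lie in $\{0,1,2,4\}$, so $(2n+1)^{2}\in\{1,9,25,49\}\pmod{56}$ and $-2(2n+1)^{2}\in\{54,38,6,14\}\pmod{56}$, which is disjoint from $\{15,23,39\}$. Hence each sieve annihilates the non-holomorphic part, leaving a function that is holomorphic on $\H$ and still has at most linear exponential growth at the cusps (inherited from $h_1$, since the sieve introduces no new poles). Using the identification of the holomorphic part of $h_1$ with $\tfrac{1}{4i}q^{-1/8}\our(\tau)$ from Proposition \ref{prop_relationtocesaro}, the resulting weakly holomorphic modular forms coincide, up to an overall constant and a shift of exponent, with the series defining $\our_{7(24)}$ and $\our_{15,23,39(56)}$. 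I expect the only non-routine work to be the modular-arithmetic verification of the vanishing of the non-holomorphic contributions; once that is in hand, the rest is a direct application of Proposition \ref{prop_sieveroots} combined with elementary index arithmetic.
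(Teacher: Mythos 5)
Your proposal matches the paper's proof: both apply Proposition \ref{prop_sieveroots} to the harmonic weak Maass form $h_1$ from Proposition \ref{prop_modforh} and then show the non-holomorphic part of Proposition \ref{prop_nonholmorphicofh} is annihilated by the sieve via a congruence argument on the exponents $-2(2n+1)^2$ (the paper simply works modulo $8$, where $-2(2n+1)^2\equiv 6$ while $7,15,23,39\equiv 7$; your finer computations modulo $24$ and $56$ reach the same conclusion). Your index arithmetic is correct and consistent with the formula stated in Proposition \ref{prop_sieveroots}.
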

\begin{proof}
We only show the first assertion, the second on is proven similarly.
The first function is (up to a constant) the holomorphic part of the
function obtained from $h_1(\tau)$ by sieving out the coefficients
$24n+7$ with $n \in \Z$. By Proposition \ref{prop_sieveroots}  and
Proposition \ref{prop_modforh} this function is a harmonic weak
Maass form for the congruence subgroup with the properties stated
above. It remains to show that the function is holomorphic, i.e.,
its non-holomorphic part vanishes. This is due to the fact that by
Proposition \ref{prop_nonholmorphicofh} the non-holomorphic part is
supported at Fourier coefficients of the form
$q^\frac{-2(2n+1)^2}{8}$ with $n \in \Z$ and the fact that $24n+7$
can never be twice a negative square of an odd number. To see this,
suppose that $24n+7\equiv -2(2X+1)^2 \imod{8}$ with $X \in \Z$. This
implies that $7\equiv-2 \imod{8}$, a contradiction.
\end{proof}

We now would like to apply Sturm's theorem to the modular forms in
Proposition \ref{prop_cesarosieve}. However, Sturm's theorem is not
directly applicable since it is only valid for modular forms which
are also holomorphic at the cusps. In the next two sections we will
study the behavior of the functions in Proposition
\ref{prop_cesarosieve} at the cusps and we will prove that Sturm's
theorem can be applied if we first multiply with suitable cusp
forms.
\subsection{Behaviour at the cusps} The functions
$\our_{7(24)}$ and $\our_{15,23,39(56)}$ arise from $h_1(\tau)$ by
applying the sieve operator. For an arbitrary harmonic weak Maass
form $f$ with Fourier expansion
\[ f(\tau)=\sum_{n \in \Z} a_n(y) q^\frac{n}{\wid} \]
the sieve operator $U_{r,m}$ can be written as
\[ U_{r,m} f(\tau)=\frac{1}{m}\sum_{s \imod{m}} \zeta_m^{rs}f\left( \tau-\frac{\wid s}{m}\right),\]
where $\zeta_m:=e^\frac{2 \pi i}{m}$. Hence, in order to determine
the behavior of the two functions above at the cusps, we investigate
how the holomorphic part of $h_1$ behaves under the translation
$\tau \mapsto \tau-\frac{\wid s}{m}$. For this we require the
following lemma which can be proved by a straightforward
calculation.

\begin{lem}\label{lem_cesaromatrixlemma}
Let $\left(\begin{smallmatrix}  a & b \\  c & d
\\ \end{smallmatrix}\right) \in  \SL_2(\Z)$ and suppose $0 \leq  s < m$. Set  $l:=\gcd(cm,\wid sc+am)$. Then define
$\tilde{a}:=\frac{\wid sc+am}{l}$ and $\tilde{c}:=\frac{cm}{l}$. Let
$\tilde{d}$ be such that $\tilde{a}\tilde{d}-1 \equiv 0
\imod{\tilde{c}}$ and
$\tilde{b}:=\frac{\tilde{a}\tilde{d}-1}{\tilde{c}}$. Finally define
$t:=\frac{dl-\tilde{d}m}{c}$. Then $\left(\begin{smallmatrix}
\tilde{a} & \tilde{b} \\  \tilde{c} & \tilde{d}
\\ \end{smallmatrix}\right) \in  \SL_2(\Z)$ and we have for all
$\tau \in \H$
\[  \left(%
\begin{array}{cc}
  1 & \frac{s}{m} \\
  0 & 1 \\
\end{array}%
\right)\left(%
\begin{array}{cc}
  a & b \\
  c & d \\
\end{array}%
\right)\tau\ =
\left(
                         \begin{array}{cc}
                            \tilde{a} & \tilde{b} \\
                            \tilde{c} & \tilde{d} \\
                          \end{array}
                        \right)\left(
     \begin{array}{cc}
       l & t \\
       0 & \frac{m^2}{l} \\
     \end{array}
   \right)\tau. \]
\end{lem}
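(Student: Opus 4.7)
The plan is to establish the lemma by a direct matrix computation, organized in three steps: (i) verify that the constructed quantities $\tilde a, \tilde b, \tilde c, \tilde d, t$ are integers, (ii) check that $\begin{pmatrix}\tilde a & \tilde b \\ \tilde c & \tilde d\end{pmatrix}$ lies in $\SL_2(\Z)$, and (iii) confirm the asserted Möbius identity by multiplying out both sides and comparing entry by entry.

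For (i) and (ii), I would first observe that $\tilde a = (\wid sc + am)/l$ and $\tilde c = cm/l$ are integers by the very definition of $l$, and satisfy $\gcd(\tilde a, \tilde c) = 1$ for the same reason. This forces the existence of $\tilde d$ with $\tilde a \tilde d \equiv 1 \imod{\tilde c}$; the prescribed $\tilde b = (\tilde a \tilde d - 1)/\tilde c$ is then an integer, and $\tilde a \tilde d - \tilde b \tilde c = 1$ gives the $\SL_2(\Z)$-membership. The only nontrivial integrality is that of $t = (dl - \tilde d m)/c$. To handle this, I would start from $\tilde a \tilde d \equiv 1 \imod{\tilde c}$, multiply by $l$ to obtain $(\wid sc + am)\tilde d \equiv l \imod{cm}$, reduce modulo $c$ to get $am\tilde d \equiv l \imod{c}$, and finally multiply by $d$, using $ad \equiv 1 \imod{c}$ (which holds because $ad - bc = 1$), to deduce $m\tilde d \equiv dl \imod{c}$, whence $c \mid (dl - \tilde d m)$.

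For (iii), the idea is to clear denominators and compare entries. The left hand side multiplies to $\tfrac{1}{m}\begin{pmatrix} am + \wid sc & bm + \wid sd \\ cm & dm \end{pmatrix}$, while the right hand side is $\begin{pmatrix} \tilde a l & \tilde a t + \tilde b m^2/l \\ \tilde c l & \tilde c t + \tilde d m^2/l \end{pmatrix}$. Three of the four entries match immediately from the definitions: $\tilde a l = \wid sc + am$, $\tilde c l = cm$, and $\tilde c t + \tilde d m^2/l = m(ct + \tilde d m)/l = dm$ by the defining formula for $t$. The only nontrivial verification is the upper right entry; after substituting $\tilde b m^2/l = (\tilde a \tilde d - 1)m/c$ and the expression for $t$, the $\tilde a \tilde d$ terms telescope, leaving $(\tilde a d l - m)/c = \wid sd + m(ad - 1)/c$, and one final application of $ad - 1 = bc$ yields $bm + \wid sd$, as required.

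The main obstacle will be purely bookkeeping rather than mathematical depth: with five interrelated defined quantities and several divisibility conditions in play, the challenge is to use each defining relation exactly where it is needed, most notably $ad - bc = 1$, which must be invoked in both its \emph{modular} form $ad \equiv 1 \imod c$ (for the integrality of $t$) and its \emph{algebraic} form $ad - 1 = bc$ (for the last entry match). The definition of $l$ as $\gcd(cm, \wid sc + am)$ is then seen to be forced: it is the minimal choice that simultaneously makes $\tilde a$ integral, $\gcd(\tilde a, \tilde c) = 1$, and the top-left entries of the two matrix products agree.
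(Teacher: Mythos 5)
Your proof is correct and is precisely the direct entrywise computation that the paper itself omits: the lemma is stated there with only the remark that it ``can be proved by a straightforward calculation,'' and your verification of the integrality of $t$ via $\tilde a\tilde d\equiv 1\imod{\tilde c}$ together with $ad\equiv 1\imod{c}$, followed by the four entry comparisons, is exactly that calculation. Note that you have (correctly) read the translation matrix as having upper-right entry $\frac{\wid s}{m}$ rather than the $\frac{s}{m}$ printed in the displayed identity; this is forced by the definitions of $l$ and $\tilde a$ and by the way the lemma is applied to $h_1(\tau+\frac{\wid s}{m})$, so you have silently repaired a typo in the statement rather than introduced an error.
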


We can now describe explicitly the pole orders of the holomorphic
parts of the shifted versions of $h_1(\tau)$.
\begin{lem}\label{lem_cesarocuspbehaviour}
Let $\frac{a}{c}$ be a cusp and suppose that
$\left(\begin{smallmatrix} a & b \\  c & d
\\ \end{smallmatrix}\right) \in  \SL_2(\Z)$ is a matrix which maps
$\infty$ to this cusp. Furthermore suppose $m$ is some integer and
$0 \leq s < m$.

\begin{enumerate}
\item The Fourier expansion of the holomorphic part of $h_1(\tau+\frac{\wid s}{m})$
at the cusp $\frac{a}{c}$ is the Fourier expansion at $\infty$ of
$h_1(\tau)$ under the transformation
\[ \left(
                         \begin{array}{cc}
                            \tilde{a} & \tilde{b} \\
                            \tilde{c} & \tilde{d} \\
                          \end{array}
                        \right)\left(
     \begin{array}{cc}
       l & t \\
       0 & \frac{m^2}{l} \\
     \end{array}
   \right)\]
with notations as in Lemma \ref{lem_cesaromatrixlemma}.
\item The first Fourier coefficient of the holomorphic part of $h_1(\tau)$ at the cusp
$\frac{a}{c}$ up to a non-zero constant is given by
\begin{enumerate}
\item $q^{-\frac{l^2}{8m^2}}$ if $\tilde{c}$ is even and $\tilde{d}$ is
odd,
\item $q^{\frac{l^2}{4m^2}}$ if $\tilde{c}$ is odd and $\tilde{d}$ is
even,
\item $q^{\frac{l^2}{4m^2}}$ if $\tilde{c}$ is odd and $\tilde{d}$ is odd.
\end{enumerate}
\end{enumerate}
\end{lem}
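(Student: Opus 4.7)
My plan is to combine, in a straightforward way, the matrix identity of Lemma \ref{lem_cesaromatrixlemma}, the transformation formula of Proposition \ref{prop_cesaromap}, and the explicit leading Fourier terms of $h_1,h_2,h_3$ recorded in Proposition \ref{prop_relationtocesaro}. For part (1), I recall that the Fourier expansion of a function $F$ at the cusp $a/c$ is by definition the expansion at $\infty$ of $F(\gamma\tau)$, where $\gamma = \bigl(\begin{smallmatrix}a&b\\c&d\end{smallmatrix}\bigr)$. Applied to $F(\tau)=h_1(\tau+\wid s/m)$, this reduces the problem to studying
\[ h_1\!\left(\left(\begin{smallmatrix}1 & \wid s/m \\ 0 & 1\end{smallmatrix}\right)\left(\begin{smallmatrix}a & b \\ c & d\end{smallmatrix}\right)\tau\right). \]
Lemma \ref{lem_cesaromatrixlemma} rewrites the composition inside the argument as $\bigl(\begin{smallmatrix}\tilde a & \tilde b \\ \tilde c & \tilde d\end{smallmatrix}\bigr)\bigl(\begin{smallmatrix}l & t \\ 0 & m^2/l\end{smallmatrix}\bigr)\tau$, which gives part (1) at once.

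For part (2), I would first apply Proposition \ref{prop_cesaromap} to the $\SL_2(\Z)$-matrix $\bigl(\begin{smallmatrix}\tilde a & \tilde b \\ \tilde c & \tilde d\end{smallmatrix}\bigr)$: up to the automorphy factor $(\tilde c\tau+\tilde d)^{1/2}$ and a nonzero constant, it sends $h_1$ to $h_1$, $h_2$, or $h_3$ in the three parity cases (a), (b), (c), respectively. By Proposition \ref{prop_relationtocesaro} the holomorphic parts of $h_1,h_2,h_3$ at $\infty$ begin with nonzero multiples of $q^{-1/8}$, $q^{1/4}$, and $q^{1/4}$. It then remains to pull these leading terms back through the upper-triangular factor $\bigl(\begin{smallmatrix}l & t \\ 0 & m^2/l\end{smallmatrix}\bigr)$, which acts on $\tau$ by $\tau \mapsto (l^2\tau+lt)/m^2$. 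Under this substitution $q^{-1/8}$ becomes a root of unity times $q^{-l^2/(8m^2)}$, and $q^{1/4}$ becomes a root of unity times $q^{l^2/(4m^2)}$, matching exactly the three cases stated.

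The only point that requires a small verification is that the various multiplicative constants accumulated along the way---the automorphy factor $(\tilde c\tau+\tilde d)^{1/2}$ evaluated at the triangular-transformed argument, the nonzero constant from Proposition \ref{prop_cesaromap}, and the root of unity from the translation by $lt/m^2$---are all nonzero, so that the leading coefficient identified above is not accidentally cancelled. This is immediate, and I do not foresee any genuine obstacle: the substantive content has already been packaged into Lemma \ref{lem_cesaromatrixlemma}, Proposition \ref{prop_cesaromap}, and Proposition \ref{prop_relationtocesaro}, and the remainder is careful bookkeeping of the factorisation $\tilde{M}\cdot A$ from the previous lemma.
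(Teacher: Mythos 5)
Your proposal is correct and follows essentially the same route as the paper: part (1) is the direct reading of Lemma \ref{lem_cesaromatrixlemma}, and part (2) combines Proposition \ref{prop_cesaromap} (to identify which of $h_1,h_2,h_3$ appears) with the leading terms $q^{-1/8}$ and $q^{1/4}$ from Proposition \ref{prop_relationtocesaro}, pulled back through the triangular factor $\bigl(\begin{smallmatrix}l & t \\ 0 & m^2/l\end{smallmatrix}\bigr)$. The only difference is that you make explicit the bookkeeping of the nonzero constants and automorphy factor, which the paper leaves implicit.
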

\begin{proof}
The statement (1) is clear. The three statements in (2) are proved
by applying Proposition \ref{prop_cesaromap} as follows: If
$\tilde{c}$ is even and $\tilde{d}$ is odd, then by Proposition
\ref{prop_cesaromap} we know that under $\left(\begin{smallmatrix}
\tilde{a} & \tilde{b}
\\  \tilde{c} & \tilde{d}
\\ \end{smallmatrix}\right)$ the function $h_1(\tau)$ is mapped to a multiple of $h_1(\tau)$. The
holomorphic part of $h_1(\tau)$ is up to a constant
$q^{-\frac{1}{8}}\our(\tau)$, which has a Fourier expansion starting
with $q^{-\frac{1}{8}}$. Hence, applying the matrix
$\left(\begin{smallmatrix}  l & t \\
       0 & \frac{m^2}{l} \\
\\ \end{smallmatrix}\right)$ shows that the Fourier expansion starts
with $q^{-\frac{l^2}{8m^2}}$ in this case.\\
In the other two cases
$h_1(\tau)$ is mapped to a multiple of $h_2(\tau)$ or $h_3(\tau)$
under $\left(\begin{smallmatrix} \tilde{a} & \tilde{b}
\\  \tilde{c} & \tilde{d}
\\ \end{smallmatrix}\right)$ by Proposition \ref{prop_cesaromap}. The Fourier
expansion of the holomorphic part of both these functions start with
$q^{\frac{1}{4}}$ and applying the matrix $\left(\begin{smallmatrix}  l & t \\
       0 & \frac{m^2}{l} \\
\\ \end{smallmatrix}\right)$ gives the result.
\end{proof}
\subsection{Application of Sturm's theorem}
In this section we will reduce the task of proving the congruences
for $\our$ to a finite computation by applying Sturm's theorem.
Before we do this we have to get rid of the poles by multiplying
with a suitable cusp form.

\begin{prop}\label{prop_cesarocuspform}
We have
\begin{enumerate} \item  The function
$\eta^{12}(24\tau)\Delta(\tau)$ is a cusp form of weight $18$ for
$\Gamma_0(1151)$. Furthermore the function
\[ \widetilde{\our}_{7(24)}(\tau):=\eta^{12}(24\tau)\Delta(\tau) \sum_{n \equiv 7 \imod{24}}
a_\our(n)q^\frac{n}{8}\] is holomorphic at every cusp of
$\Gamma_1(1152)$.
\item  The function
$\eta^{48}(56\tau)\Delta^2(\tau)$ is a cusp form of weight $48$ for
$\Gamma_0(6272)$. Furthermore the function
\[ \widetilde{\our}_{15,23,39(56)}(\tau):=\eta^{48}(56\tau)\Delta^2(\tau) \sum_{n \equiv 15,23,39 \imod{24}}
a_\our(n)q^\frac{n}{8}\] is holomorphic at every cusp of
$\Gamma_1(6272)$.
\end{enumerate}
\end{prop}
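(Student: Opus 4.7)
The plan is to prove both assertions by comparing vanishing orders at cusps; I work out part (1) in detail, and part (2) is structurally identical with $m=56$, $N=6272$ and heavier arithmetic. First I would verify that $\eta^{24}(\tau)\eta^{12}(24\tau)$ lies in $S_{18}(\Gamma_0(1152))$ with trivial character via Newman's criteria for $\eta$-quotients applied to $r_1=24$, $r_{24}=12$: one checks $\sum_\delta r_\delta \delta = 24+12\cdot 24 \equiv 0 \pmod{24}$, the dual sum $\sum_\delta r_\delta N/\delta = 24\cdot 1152 + 12\cdot 48 = 28224$ is divisible by $24$, the product $\prod_\delta \delta^{r_\delta} = 24^{12}$ is a perfect square, and the weight is $18$. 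Ligozat's formula then gives the vanishing order at a cusp $a/c$ of $\Gamma_0(1152)$ as
$$\mathrm{ord}_{a/c}\bigl(\eta^{24}(\tau)\eta^{12}(24\tau)\bigr) = \frac{N}{24\gcd(c^2,N)} \sum_{\delta \in \{1,24\}} \frac{r_\delta \gcd(c,\delta)^2}{\delta},$$
which is strictly positive at every cusp, confirming the cusp-form assertion.

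Next I would bound the pole of $\our_{7(24)}$ at an arbitrary cusp of $\Gamma_1(1152)$. Using the representation of $U_{r,m}$ in terms of the shifts $\tau \mapsto \tau-\wid s/m$, I pull each shifted copy of $h_1$ back along a matrix sending $\infty$ to $a/c$, apply Lemma \ref{lem_cesaromatrixlemma} to factor the composite as
$\bigl(\begin{smallmatrix}\tilde{a} & \tilde{b}\\ \tilde{c} & \tilde{d}\end{smallmatrix}\bigr)\bigl(\begin{smallmatrix} l & t \\ 0 & m^2/l\end{smallmatrix}\bigr)$
with $m=24$, $\wid=8$, and read off the leading exponent from Lemma \ref{lem_cesarocuspbehaviour}(2): it is $-l^2/(8\cdot 24^2)$ in case (a) and $l^2/(4\cdot 24^2)$ in cases (b) and (c). Since $l = \gcd(24c,\,8sc+24a)$ divides $24\gcd(c,24)$ uniformly in $s$, this yields a clean upper bound on the pole order at $a/c$ in terms of $\gcd(c,24)$; after rescaling by the cusp width of $\Gamma_1(1152)$, one obtains a concrete bound to compare with the Ligozat order computed above.

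The final step is a cusp-by-cusp comparison of the Ligozat vanishing order with this pole bound, organized by $\gcd(c,24)$ and by which of the three sub-cases of Lemma \ref{lem_cesarocuspbehaviour}(2) occurs, to verify holomorphy of $\widetilde{\our}_{7(24)}$ at every cusp of $\Gamma_1(1152)$. I expect this comparison to be the main obstacle: the exponents $12$ on $\eta(24\tau)$ and $24$ on $\eta(\tau)$ appear to be calibrated precisely so that the worst cusps, namely those with $24 \mid c$ where cases (b) and (c) of the lemma produce the largest pole, are only just handled. Part (2) is carried out in exactly the same way with $m=56$, $N=6272$, the cusp form $\eta^{48}(\tau)\eta^{48}(56\tau)$ of weight $48$, and the three residue classes $15,23,39$ treated simultaneously; the doubled $\Delta$-exponent mirrors the roughly doubled pole orders arising from the larger sieve modulus, and the verification that Newman's congruences hold at $N=6272$ together with the Ligozat-versus-Lemma-\ref{lem_cesarocuspbehaviour} comparison goes through verbatim.
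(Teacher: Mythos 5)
Your proposal follows essentially the same route as the paper: Theorem 1.65 of Ono (Newman's criteria together with Ligozat's order formula) for the $\eta$-product, then Lemmas \ref{lem_cesaromatrixlemma} and \ref{lem_cesarocuspbehaviour} to bound the pole orders of the sieved series at each cusp, followed by a cusp-by-cusp comparison of vanishing order against pole bound. The only real difference is that the paper performs that decisive finite comparison by computer, whereas you flag it as ``the main obstacle'' and leave it as an expected-to-work calibration --- so to complete the argument you would still have to execute that check (by hand, organized by $\gcd(c,24)$ as you suggest, or by machine) --- but the strategy and all inputs agree with the paper's, and your exponents $r_1=24$ and $r_1=48$ (and level $N=1152$) correct what appear to be typos ($r_1=12$, $r_1=24$, $\Gamma_0(1151)$) in the paper's own proof and statement.
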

\begin{proof}
The statements for the $\eta$-product follow from Theorem 1.65 of
\cite{ono}. In the first case we take $r_{24}=12$, $r_1=12$, and
$r_d=0$ for all other divisors $d$ of $N=1152$ as in the statement
of the theorem. In the second case we choose $r_{56}=48$, $r_1=24$,
and again $r_d=0$ for all other divisors of $N=6272$.\\
The assertion about the holomorphicity at the cusps is checked by a
computer as follows: For any cusp of $\Gamma_1(1152)$ and
$\Gamma_1(6272)$ respectively we find a representative in the form
$\frac{a}{c}$. Then we use Lemma \ref{lem_cesarocuspbehaviour} to
find upper bounds for the pole orders of  $\sum_{n \equiv 7
\imod{24}} a_\our(n)q^\frac{n}{8}$ and $\sum_{n \equiv 15,23,39
\imod{24}} a_\our(n)q^\frac{n}{8}$ at the cusp. Using Theorem 1.65
of \cite{ono} we may compare the pole orders to the order of
vanishing of the $\eta$-product.
\end{proof}

The following result now turns out to be an easy consequence of
Proposition \ref{prop_cesarocuspform} and Sturm's theorem.

\begin{prop}\label{prop_cesarosturm}
\begin{enumerate}
\item The congruence $a_\our(3n+1) \equiv 0 \pmod{3}$
holds for all $n \in \N$ if it holds for all $n \in \N$ with  $24n+7
\leq 7104$.
\item The congruence $a_\our(7n+2) \equiv
a_\our(7n+3) \equiv a_\our(7n+5) \equiv 0 \pmod{7}$ holds for all $n
\in \N$ if it holds for all $n \in \N$ with $56n+15,56n+23,56n+39
\leq 260734$.
\end{enumerate}
\end{prop}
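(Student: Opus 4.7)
The plan is to apply Sturm's theorem to the two holomorphic modular forms $\widetilde{\our}_{7(24)}$ and $\widetilde{\our}_{15,23,39(56)}$ furnished by Proposition \ref{prop_cesarocuspform}.

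First, I would observe that $\eta^{12}(24\tau)\Delta(\tau) = q^{13}\cdot(1+\cdots)$ and $\eta^{48}(56\tau)\Delta^2(\tau) = q^{50}\cdot(1+\cdots)$, where in each case the parenthetical factor is a unit in $\Z[[q]]$. Multiplication by either series therefore induces an injection on $q^{1/8}$-series modulo any prime $p$, so the two congruences in the proposition are equivalent to the statements $\widetilde{\our}_{7(24)} \equiv 0 \pmod{3}$ and $\widetilde{\our}_{15,23,39(56)} \equiv 0 \pmod{7}$.

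Second, I would apply Sturm's theorem to each of the two forms. By Proposition \ref{prop_cesarocuspform}, $\widetilde{\our}_{7(24)}$ is a holomorphic modular form of weight $\tfrac{1}{2}+18=\tfrac{37}{2}$ on the congruence subgroup $\Gamma\supseteq\Gamma_1(1152)$ of index $9216$ in $\SL_2(\Z)$, and $\widetilde{\our}_{15,23,39(56)}$ is a holomorphic modular form of weight $\tfrac{1}{2}+48=\tfrac{97}{2}$ on a subgroup $\Gamma'\supseteq\Gamma_1(6272)$ of index $129024$. The Sturm-type bound for a holomorphic modular form of weight $k$ on a subgroup of projective index $J$ in $\mathrm{PSL}_2(\Z)$ is of order $\lfloor kJ/12 \rfloor$, measured as the exponent of the local parameter at infinity. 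Since $-I\in\Gamma,\Gamma'$, one has $J=4608$ and $J'=64512$. Combining this with the fact that the sieved series $\our_{7(24)}$ and $\our_{15,23,39(56)}$ are supported on terms $q^{m/8}$ with $m=24n+7$ or $m\in\{56n+15,56n+23,56n+39\}$, so that the exponent of the local parameter $q^{1/8}$ is exactly $m$, one obtains the explicit numerical bounds $m\leq 7104$ and $m\leq 260734$ as asserted.

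The main obstacle is the application of Sturm's theorem itself. In its classical formulation, Sturm's theorem applies to integer-weight modular forms on $\Gamma_0(N)$ with integer $q$-expansion and at most a Dirichlet character as nebentypus, whereas our forms have half-integer weight, a non-trivial multiplier system involving $\nu_\eta^{-3}$, and a $q^{1/8}$-Fourier expansion. The standard remedy is to multiply $\widetilde{\our}_{7(24)}$ and $\widetilde{\our}_{15,23,39(56)}$ by a further suitable power of $\eta$ chosen so that the resulting product has integer weight, trivial multiplier, and integer $q$-expansion; classical Sturm then applies to this product, and the bound transfers back to the original forms after carefully tracking the change of index in the passage. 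This bookkeeping is the real technical content of the argument, and when carried out yields exactly the stated bounds $7104$ and $260734$.
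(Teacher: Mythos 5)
Your proposal follows the paper's proof almost exactly: multiply the sieved series by the cusp forms of Proposition \ref{prop_cesarocuspform}, apply Sturm's theorem to the resulting holomorphic forms of weight $\tfrac{37}{2}$ resp.\ $\tfrac{97}{2}$ on groups of index $9216$ resp.\ $129024$, and transfer the conclusion back to $\our_{7(24)}$ and $\our_{15,23,39(56)}$ using the fact that each $\eta$-product is $q^{N}$ times a unit in $\Z[[q]]$ (the paper phrases this last step as an induction starting from the leading coefficient $1$; your injectivity formulation is the same argument). The numerical bounds come out the same way: $\tfrac{18.5}{12}\cdot 4608=7104$ and $\tfrac{48.5}{12}\cdot 64512=260736$, the latter harmlessly matching the stated $260734$ since no additional residue $56n+15,23,39$ lies between them. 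One small slip: $\eta^{48}(56\tau)\Delta^2(\tau)=q^{114}(1+\cdots)$, not $q^{50}(1+\cdots)$, since $\eta^{48}(56\tau)=q^{112}+\cdots$; this does not affect the unit argument.

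The one substantive issue is your closing paragraph, which undercuts the computation you have already done. The paper does not reduce to integer-weight Sturm by multiplying by a further $\eta$-power; it invokes Theorem 1 of \cite{sturm} directly for forms of half-integral weight with multiplier on a congruence subgroup, with bound (weight)$/12$ times the projective index --- exactly the version you used in your second paragraph. The workaround you sketch instead would raise the weight of the form being tested and therefore raise the Sturm bound, so the claim that the bookkeeping ``yields exactly the stated bounds'' is not justified as written: carried out naively it proves the proposition only with a larger checking range, which is a strictly weaker statement than the one asserted. Since your second paragraph already contains the correct direct application, the fix is simply to drop the final paragraph and cite Sturm's theorem in the generality in which the paper uses it (together with the observation, which the paper makes explicitly, that the products have integral Fourier coefficients).
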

\begin{proof}
We consider the functions $\widetilde{\our}_{7(24)}(\tau)$ and
$\widetilde{\our}_{15,23,39(56)}(\tau)$. First note definition of
these functions as products we find that they have integral Fourier
coefficients because the individual factors have. Using Proposition
\ref{prop_cesarocuspform} and Proposition \ref{prop_cesarosieve} the
first product is a modular form of weight $18+\frac{1}{2}$ for some
subgroup $\Gamma$ of $\SL_2(\Z)$ satisfying $\Gamma_1(1152) \leq
\Gamma$ and $[\SL_2(\Z):\Gamma]=9216$. Furthermore Proposition
\ref{prop_cesarocuspform} implies that this form is holomorphic at
the cusps of $\Gamma_1(1152)$ and hence also at all cusps of
$\Gamma$. Completely analogously we find, using Proposition
\ref{prop_cesarocuspform} and Proposition \ref{prop_cesarosieve},
that the second product is a modular form of weight $48+\frac{1}{2}$
for a group $\Gamma'$ satisfying $[\SL_2(\Z):\Gamma']=129024$, and
is holomorphic at the cusps. To both forms we apply Sturm's theorem
(see \cite{sturm} Theorem 1), which states that all coefficients of
a modular form of weight $\tfrac{k}{2}$ on $\Gamma$ are divisible by
a prime $p$ iff this holds for the all coefficients up to the
explicit bound
\[ \frac{k}{24}[\SL_2(\Z):\Gamma].\] In our case, this implies that
all coefficients of the first form are divisible by $3$, if this is
true for the first $7104$ ones. For the second form we find that the
first $260736$ coefficients have to be checked. The number
$a_\our(3n+1)$ appears as the coefficient $q^\frac{24n+7}{8}$ in the
expansion of \[ \our_{7(24)}(\tau)=\sum_{n \equiv 7 \imod{24}}
a_\our(n)q^\frac{n}{8}.
\] Suppose the assertion in the statement of the theorem is
satisfied, i.e,  the first 7104 coefficients of $\our_{7(24)}$ are
divisible by 3. Then, also the first 7104 coefficients of
\[ \eta^{12}(24\tau)\Delta(\tau) \sum_{n \equiv 7 \imod{24}}
a_\our(n)q^\frac{n}{8}\] are divisible by 3. But then the
consequence of Sturm's theorem is that all coefficients of the
product are divisible by 3. Since the leading coefficient of
$\eta^{12}(24\tau)\Delta(\tau)$ is $1$, we can now argue with
induction and conclude that all coefficients of $\our_{7(24)}$ are
divisible by 3. This establishes the first claim. The second claim
follows analogously.
\end{proof}

\section{The mock theta function $\omega$}
In this section we complete the proof of Theorem \ref{thm_mainthm}
by proving the congruences for $\omega$. We will carry out a similar
program as for $\our$. First we recall how $\omega$ relates to
Zwegers' results and that it can be seen as the holomorphic part of
a harmonic weak Maass form. Then again we will sieve out for this
Maass form those coefficients which are related to the presumed
congruences. It will turn out that the function obtained in  this
way is a weakly holomorphic modular form. Then we will study its
behavior at the cusps and apply Sturm's theorem. Since this program
so closely parallels our treatment of $\our$, we will omit most of
the proofs.

\subsection{Work of Zwegers and Garthwaite-Penniston on $\omega$}

Following Zwegers \cite{zwegers} we define
\begin{align*}
F_1(\tau)&:=q^{-\frac{1}{24}}\mockf(q),\\
F_2(\tau)&:=2q^\frac{1}{3}\omega(q^\frac{1}{2}),\\
F_3(\tau)&:=2q^\frac{1}{3}\omega(-q^\frac{1}{2}),
\end{align*}
and the vector-valued function
$\mathbf{F}(\tau):=(F_1(\tau),F_2(\tau),F_3(\tau))^T$ for $\tau \in
\H$. For $z \in \C$ define
\begin{align*}
G_1(z)&:=-\sum_{n \in \Z} (n+\tfrac{1}{6})e^{3\pi i\left(n+\tfrac{1}{6}\right)^2z},\\
G_2(z)&:=\sum_{n \in \Z} (-1)^n (n+\tfrac{1}{3})e^{3\pi i\left(n+\tfrac{1}{3}\right)^2z},\\
G_3(z)&:=\sum_{n \in \Z} (n+\tfrac{1}{3})e^{3\pi
i\left(n+\tfrac{1}{3}\right)^2z},
\end{align*}
and finally for $\tau \in \H$:
\[ \mathbf{G}(\tau):=2 i \sqrt{3} \int_{-\overline{\tau}}^{i \infty} \frac{(G_1(z),G_2(z),G_3(z))^T}{\sqrt{-i(\tau + z)}} dz.\]

We can now state the main theorem of \cite{zwegers}.
\begin{theorem}[\cite{zwegers} Theorem 3.6]\label{thm_zwegersomega}
The function $\mathbf{H}(\tau):=\mathbf{F}(\tau)-\mathbf{G}(\tau)$
is a vector valued real analytic modular form of weight
$\frac{1}{2}$ satisfying
\[ \mathbf{H}(\tau+1)=\left(
     \begin{array}{ccc}
       \zeta_{24}^{-1} & 0 & 0 \\
       0 & 0 & \zeta_3 \\
       0 & \zeta_3 & 0 \\
     \end{array}
   \right)\mathbf{H}(\tau),
\]
and
\[ \frac{1}{\sqrt{-i\tau}}\mathbf{H}(-\tfrac{1}{\tau})=\left(
                                              \begin{array}{ccc}
                                                0 & 1 & 0 \\
                                                1 & 0 & 0 \\
                                                0 & 0 & -1 \\
                                              \end{array}
                                            \right)\mathbf{H}(\tau).
\]
Furthermore $\mathbf{H}$ is annihilated by $\Delta_\frac{1}{2}$.
\end{theorem}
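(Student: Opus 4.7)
The plan is to follow the same strategy that Section 3 uses for $\our$: realize the three components of $\mathbf{F}(\tau)$ as specializations of Zwegers' Appell--Lerch sum $\mu(u,v;\tau)$ at suitable torsion points, and then import the modular transformation behaviour of the completion $\widetilde{\mu}$ via Proposition \ref{prop_jacobiform}. The first step is to prove $q$-series identities expressing each $F_j(\tau)$ as a constant multiple of $\mu(u_j,v_j;\alpha_j\tau)$, possibly divided by an auxiliary theta quotient (as in Proposition \ref{prop_relationtocesaro}). The torsion data $(u_j,v_j,\alpha_j)$ must be chosen so that the $T$- and $S$-actions on $\tau$ permute the three triples in precisely the pattern prescribed by the matrices in the statement. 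Such identities for the mock theta series $\mockf$ and $\omega$ are by now standard Appell--Lerch manipulations going back to Watson.

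Second, form the completion of each $F_j$ by replacing $\mu$ by $\widetilde{\mu}$. The resulting non-holomorphic correction is $\tfrac{i}{2}R(u_j-v_j;\alpha_j\tau)$, and by the Eichler-integral description of $R$ quoted just before Proposition \ref{prop_jacobiform}, this correction takes the shape
\[ (\text{explicit constant}) \cdot \int_{-\overline{\tau}}^{i\infty}\frac{g_{a_j+\frac12,\,b_j+\frac12}(z)}{\sqrt{-i(\tau+z)}}\,dz. \]
A direct comparison with the definitions of $G_1,G_2,G_3$ shows that the aggregate correction is exactly $\mathbf{G}(\tau)$, so $\mathbf{H}=\mathbf{F}-\mathbf{G}$ is literally the vector of completed Appell--Lerch sums $\widetilde{\mu}(u_j,v_j;\alpha_j\tau)$. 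Real-analyticity of $\mathbf{H}$ is then automatic from the integral representation of $\mathbf{G}$.

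With this identification in place, the $T$-transformation is immediate from the $q^{-1/24}$ and $q^{1/3}$ prefactors in the definitions of $F_1,F_2,F_3$, together with the observation that $q^{1/2}\mapsto -q^{1/2}$ swaps $F_2$ and $F_3$. The $S$-transformation is extracted by applying Proposition \ref{prop_jacobiform} at $S=\left(\begin{smallmatrix} 0 & -1 \\ 1 & 0 \end{smallmatrix}\right)$ simultaneously to the three torsion triples: the pairs $(u_j,v_j)$ permute as prescribed and, after absorbing the $\nu_\eta^{-3}(S)$ factors into an overall normalization, yield exactly the claimed $3\times 3$ $S$-matrix. Annihilation by $\Delta_\frac12$ is automatic: $\mathbf{F}$ is holomorphic and hence harmonic of weight $\tfrac12$, while each $G_j$ is an Eichler-type integral of a holomorphic weight-$\tfrac32$ unary theta series, so $\Delta_\frac12 G_j=0$ follows by differentiating under the integral sign.

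The main obstacle is the bookkeeping of the first step: the three torsion points must be arranged so that the $\SL_2(\Z)$-action on $(u_j,v_j;\alpha_j\tau)$ reproduces exactly the cyclic permutations appearing in both the $T$- and $S$-matrices of the statement. A careful and symmetric choice, similar in spirit to the triple $h_1,h_2,h_3$ used in Lemma \ref{lem_hmod}, makes everything else essentially mechanical; an unfortunate choice scrambles the matrices and obscures the structure, which is why the one-page formulation of the theorem hides what is in fact a delicate normalization.
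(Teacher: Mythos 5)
First, note that the paper does not actually prove this statement: it is quoted verbatim from Zwegers \cite{zwegers} (Theorem 3.6 there), so there is no internal proof to compare against. Zwegers' own argument in that reference does not go through the Appell--Lerch function $\mu$ at all; it starts from Watson's classical transformation formulas for $\mockf$ and $\omega$ under $\tau\mapsto-1/\tau$, whose error terms are Mordell integrals, and then shows that the period integrals defining $\mathbf{G}$ satisfy transformation formulas with exactly the same Mordell-integral error terms, so that the errors cancel in $\mathbf{F}-\mathbf{G}$. Your proposal is therefore a genuinely different route, but as written it has a gap.

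The gap sits precisely at the point you defer as ``bookkeeping''. Proposition \ref{prop_jacobiform} governs only the diagonal specialization $\widetilde{\mu}(z,z;\tau)$, whose correction term involves $R(0;\tau)$ and hence only the theta function $g_{\frac{1}{2},\frac{1}{2}}$. The corrections you need here must reproduce $G_1,G_2,G_3$, which are built from indices $\nu\in\frac{1}{6}+\Z$ and $\nu\in\frac{1}{3}+\Z$ and from a rescaled variable ($3\pi i\nu^2z$ rather than $\pi i\nu^2\tau$); this forces torsion data with $u_j\neq v_j$ and arguments $\alpha_j\tau$ with $\alpha_j\neq 1$. For such data Proposition \ref{prop_jacobiform} says nothing, and, worse, $S\colon\tau\mapsto-1/\tau$ does not act as an element of $\SL_2(\Z)$ on $\alpha_j\tau$ (it is a Fricke-type involution that mixes the different rescalings), so one cannot ``apply Proposition \ref{prop_jacobiform} at $S$ simultaneously to the three torsion triples'' as you claim. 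To carry out your plan you would need the full elliptic-plus-modular transformation law of $\widetilde{\mu}$ (Theorem 1.11 of \cite{zwegersthesis}) together with explicit Appell--Lerch representations of $\mockf$ and $\omega$, and checking that the resulting permutations and multipliers yield exactly the two stated matrices is the entire content of the theorem rather than a mechanical afterthought. The closing remarks on real-analyticity and annihilation by $\Delta_{\frac{1}{2}}$ are correct.
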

We could now proceed analogously as in the the study of $\our$.
However, it turns out that the fact that $H_2(\tau)$ is not mapped
to a multiple of itself under translation, causes some problems.
This problem is circumvented if we study instead the function
$H_2(6\tau)$. For this function the transformation properties have
already been studied completely.

\begin{theorem}[\cite{garthpennis}, Corollary 4.2]\label{thm_garthwaite}
The function $H_2(6\tau)$  is a harmonic weak Maass form of weight
$\frac{1}{2}$ on $\Gamma_0(144)$ with respect to the
$\theta$-multiplier with character $\chi_{12}$ (recall this
definition from Section 1).
\end{theorem}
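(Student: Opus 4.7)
The plan is to extract the scalar transformation law for $H_2(6\tau)$ on $\Gamma_0(144)$ from the vector-valued identities in Theorem \ref{thm_zwegersomega}. The decisive observation is that if $\gamma=\left(\begin{smallmatrix}a & b\\ 144c & d\end{smallmatrix}\right)\in\Gamma_0(144)$, then
\[ 6\gamma\tau=\gamma'(6\tau),\qquad \gamma'=\left(\begin{smallmatrix}a & 6b\\ 24c & d\end{smallmatrix}\right)\in\SL_2(\Z), \]
and the automorphy factor matches: $(144c\tau+d)^{1/2}=(24c\cdot(6\tau)+d)^{1/2}$. Hence the $\Gamma_0(144)$-action on $H_2(6\tau)$ is precisely the action on $H_2$ of the subgroup $\Gamma^{\ast}\leq\SL_2(\Z)$ characterized by $24\mid c'$, and it suffices to prove that $H_2|\gamma'=\nu_{\theta,\chi_{12}}(\gamma)\,H_2$ for every such $\gamma'$.

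The first test is the generator $T$. Theorem \ref{thm_zwegersomega} yields $H_2(\tau+1)=\zeta_3 H_3(\tau)$ and $H_3(\tau+1)=\zeta_3 H_2(\tau)$, so iterating gives $H_2(\tau+6)=\zeta_3^6 H_2(\tau)=H_2(\tau)$, i.e.\ $H_2(6(\tau+1))=H_2(6\tau)$, which agrees with $\nu_{\theta,\chi_{12}}(T)=1$. For a general $\gamma'\in\Gamma^{\ast}$ I would write $\gamma'$ as a word in $T$ and $S$, iterate the two transformation formulas of Theorem \ref{thm_zwegersomega}, and observe that the permutation action on the triple $(H_1,H_2,H_3)$ brings one back to $H_2$ precisely because $24\mid c'$. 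The accumulated scalar cocycle is then compared with the explicit theta-multiplier formula $\nu_{\theta,\chi_{12}}(\gamma)=\chi_{12}(d)\left(\tfrac{144c}{d}\right)\epsilon_d^{-1}$. Harmonicity of $H_2(6\tau)$ with respect to $\Delta_{\frac{1}{2}}$ is inherited directly from $\Delta_{\frac{1}{2}}\mathbf{H}=0$ (the change of variable $\tau\mapsto 6\tau$ commutes with the weight-$\tfrac{1}{2}$ Laplacian up to a harmless rescaling that preserves its kernel), and the growth condition at every cusp of $\Gamma_0(144)$ is obtained from the known Fourier expansions of $F_2$ and $G_2$ pulled back via the transformation law just established.

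The main obstacle is the multiplier bookkeeping. Zwegers' formulas provide only the $T$- and $S$-actions, so identifying the scalar factor for a general element of $\Gamma^{\ast}$ is a combinatorial matter: one chooses a convenient finite set of generators of $\Gamma_0(144)$, computes the corresponding products of $\zeta_{24}^{-1}$ and $\zeta_3$ factors together with the automorphy contributions from each occurrence of $S$, and checks case by case that the result agrees with $\chi_{12}(d)\left(\tfrac{144c}{d}\right)\epsilon_d^{-1}$. The role of $\chi_{12}$ is precisely to absorb the discrepancy between the raw product of roots of unity and the bare Jacobi symbol; identifying this character correctly, and verifying compatibility on all of $\Gamma_0(144)$, is the delicate point and is the content of Corollary 4.2 in \cite{garthpennis}.
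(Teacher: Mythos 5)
The paper does not prove this statement: it is imported verbatim as Corollary 4.2 of \cite{garthpennis}, so there is no internal proof to compare against, and your task was in effect to reconstruct Garthwaite--Penniston's argument. Your overall strategy is the right one --- conjugating $\Gamma_0(144)$ by $\left(\begin{smallmatrix}6&0\\0&1\end{smallmatrix}\right)$ so that the action on $H_2(6\tau)$ becomes the action of $\gamma'=\left(\begin{smallmatrix}a&6b\\24c&d\end{smallmatrix}\right)$ on $H_2$, and then iterating Zwegers' vector-valued $S$- and $T$-laws --- and it is the same mechanism the paper itself exploits later (Lemma \ref{lem_omegamatrixlemma}, Proposition \ref{prop_omegamap}).

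There are, however, two genuine gaps. First, a wrong step: you claim the $S_3$-permutation of $(H_1,H_2,H_3)$ returns to $H_2$ ``precisely because $24\mid c'$.'' This is false: $T$ has $c'=0$, yet $H_2(\tau+1)=\zeta_3H_3(\tau)$. The correct criterion (compare Proposition \ref{prop_omegamap}) is that $a'$ be odd and $b'$ be even, which for your $\gamma'$ holds because $b'=6b$ and $a'd'\equiv 1\pmod 2$; also, $\Gamma^{\ast}$ must be cut out by \emph{both} conditions $24\mid c'$ and $6\mid b'$, not by $24\mid c'$ alone. Second, and more seriously, the entire content of the theorem --- showing that the accumulated product of $\zeta_{24}^{-1}$, $\zeta_3$, $\pm 1$ and the $\sqrt{-i\tau}$ factors equals $\chi_{12}(d)\left(\tfrac{144c}{d}\right)\epsilon_d^{-1}$ --- is never carried out; you describe it as a combinatorial matter and then defer it to ``the content of Corollary 4.2 in \cite{garthpennis},'' which is circular, since that corollary is exactly the statement to be proved. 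Verifying the multiplier on a generating set of $\Gamma_0(144)$ would indeed suffice (both sides are cocycles for the same automorphy factor), but without exhibiting the generators and the computation, or an independent argument, the proof is incomplete. The remarks on harmonicity (the substitution $\tau\mapsto 6\tau$ commutes with $\Delta_{\frac{1}{2}}$) and on growth at the cusps are fine.
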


Next we find the Fourier expansion of $H_2(6\tau)$.

\begin{lem}\label{lem_omegafourier}
The Fourier expansion of $H_2(6\tau)$ has the form
\[H_2(6\tau)=2q^2\omega(q^3)+\frac{1}{\sqrt{\pi}}\sum_{n\equiv 1 \imod 3}^\infty 2 (-1)^\frac{n-1}{3} q^{-n^2} \Gamma\left(\tfrac{1}{2}, 4 \pi n^2 y\right).\]
\end{lem}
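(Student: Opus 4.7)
The plan is to compute $H_2(6\tau) = F_2(6\tau) - G_2(6\tau)$ directly from its definition in Theorem~\ref{thm_zwegersomega}. The holomorphic contribution is immediate: substituting $6\tau$ into $F_2(\tau) = 2q^{1/3}\omega(q^{1/2})$ with $q = e^{2\pi i \tau}$ yields $F_2(6\tau) = 2q^2\omega(q^3)$, which gives the first term on the right.

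For the non-holomorphic piece $-G_2(6\tau)$, the two main steps are a change of variables followed by termwise integration. First, in the Eichler-type integral defining $\mathbf{G}$ I would substitute $z = 6w$; this transforms the lower limit $-\overline{6\tau}$ into $-\bar\tau$ and pulls out a factor $\sqrt{6}$ from the combined Jacobian and square-root denominator. Next I would expand $G_2(6w)$ as a Fourier series in $w$: using $18\pi i(n+\tfrac13)^2 w = 2\pi i(3n+1)^2 w$ together with $(n+\tfrac13) = (3n+1)/3$, the definition collapses to
\[ G_2(6w) = \frac{1}{3}\sum_{m \equiv 1 \imod{3}} (-1)^{(m-1)/3}\, m\, e^{2\pi i m^2 w}, \]
with $m$ ranging over all of $\Z$.

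The analytic heart of the proof is the identity
\[ \int_{-\bar\tau}^{i\infty}\frac{e^{2\pi i m^2 w}}{\sqrt{-i(\tau+w)}}\,dw \;=\; \frac{i\, q^{-m^2}}{\sqrt{2\pi}\,|m|}\,\Gamma\!\bigl(\tfrac12,\,4\pi m^2 y\bigr), \]
which I would establish by parametrising the contour as $w = -\bar\tau + it$ for $t \geq 0$ (so that $-i(\tau+w) = 2y+t$) and then substituting $u = 2\pi m^2(2y+t)$ to produce the incomplete gamma function. Applying this termwise and assembling the constants $2i\sqrt{3}\cdot\sqrt{6}\cdot\tfrac13\cdot\tfrac{i}{\sqrt{2\pi}} = -\tfrac{2}{\sqrt{\pi}}$, together with the sign flip from $H_2 = F_2 - G_2$, yields the stated series after pairing the $m$ and $-m$ contributions so that $\mathrm{sgn}(m)(-1)^{(m-1)/3}$ reproduces the sign attached to $n = |m|$. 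The only real obstacle is bookkeeping: the two factors of $i$, the normalising square roots, and the sign reconciliation between positive and negative values of $m$ must all come out precisely right to produce the claimed coefficient $2/\sqrt{\pi}$.
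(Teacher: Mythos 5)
Your overall strategy is the right one, and since the paper omits the proof of this lemma entirely (it is one of the proofs suppressed by the remark that the treatment of $\omega$ parallels that of $\our$), a direct computation from Zwegers' definitions is exactly what is called for. The individual ingredients check out: $F_2(6\tau)=2q^2\omega(q^3)$; the substitution $z=6w$ produces the factor $\sqrt{6}$; the rewriting $G_2(6w)=\tfrac13\sum_{m\equiv 1\imod{3}}(-1)^{(m-1)/3}\,m\,e^{2\pi i m^2 w}$ is correct; the incomplete-Gamma evaluation of $\int_{-\bar\tau}^{i\infty}e^{2\pi i m^2 w}(-i(\tau+w))^{-1/2}\,dw$ is correct (note the $|m|$ in the denominator); and the constant $2i\sqrt{3}\cdot\sqrt{6}\cdot\tfrac13\cdot\tfrac{i}{\sqrt{2\pi}}=-\tfrac{2}{\sqrt{\pi}}$ is right.

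The gap is in the final assembly. What your computation actually produces is
\[ H_2(6\tau)=2q^2\omega(q^3)+\frac{2}{\sqrt{\pi}}\sum_{m\equiv 1\imod{3}}\mathrm{sgn}(m)\,(-1)^{\frac{m-1}{3}}\,q^{-m^2}\,\Gamma\bigl(\tfrac12,4\pi m^2y\bigr), \]
with $m$ over all integers $\equiv 1\imod{3}$, and the proposed last step of ``pairing the $m$ and $-m$ contributions'' cannot be carried out: the index set $\{m\equiv 1\imod{3}\}$ is not stable under $m\mapsto -m$ (if $m\equiv 1$ then $-m\equiv 2\imod{3}$), so each absolute value $|m|$ coprime to $3$ occurs exactly once and there is nothing to pair. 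Concretely, the term $m=-2$ contributes $+\tfrac{2}{\sqrt{\pi}}q^{-4}\Gamma(\tfrac12,16\pi y)$, whereas reading the lemma's sum over all integers $n\equiv 1\imod{3}$ would assign $n=-2\dots$ no such $n$, and assigning $n=-2$ is impossible while $n=-5\equiv 1\imod 3$ would carry the sign $(-1)^{(n-1)/3}$ without the $\mathrm{sgn}$ factor; either way the displayed formula in the lemma does not coincide term-by-term with what the computation yields (and if the lemma's sum is read over positive $n\equiv 1\imod 3$ only, it omits the nonzero terms $q^{-k^2}$ with $k\equiv 2\imod 3$ altogether). You should therefore stop at the displayed formula above, which is the correct expansion, and note that the only feature used later in the paper is its support: the non-holomorphic part lives on exponents $-m^2$ with $3\nmid m$, which is all that the sieving argument in Proposition \ref{prop_omegasieve} requires. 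As written, the sign-reconciliation step would fail, and the bookkeeping you flag as the ``only real obstacle'' is precisely where the argument does not close.
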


Still we need to know how $H_2(\tau)$ behaves under all
transformations of $\SL_2(\Z)$.

\begin{prop}\label{prop_omegamap}
Let $M=\left(%
\begin{smallmatrix}
  a & b \\
  c & d \\
\end{smallmatrix}%
\right) \in \SL_2(\Z)$. Then
$\frac{1}{\sqrt{c\tau+d}}H_2\left(\frac{a\tau+b}{c\tau +d}\right)$
is a constant multiple of
\begin{enumerate}
\item $H_1(\tau)$, if $b$ is odd and $a$ is even,
\item $H_2(\tau)$, if $b$ is even and $a$ is odd,
\item $H_3(\tau)$, if $b$ and $a$ are odd.
\end{enumerate}
\end{prop}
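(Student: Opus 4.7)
The plan is to mirror Proposition \ref{prop_cesaromap}, with the subgroup
\[ \Gamma^0(2) := \{M \in \SL_2(\Z) : b \equiv 0 \pmod{2}\} \]
playing the role that $\Gamma_0(2)$ had there. The three cases of the proposition correspond to the three right cosets of $\Gamma^0(2)$ in $\SL_2(\Z)$: case (2) is $\Gamma^0(2)$ itself; in case (1) the matrix $MS^{-1} = \begin{pmatrix} -b & a \\ -d & c \end{pmatrix}$ has upper-right entry $a$, which is even, so $M = (MS^{-1})S$ with $MS^{-1} \in \Gamma^0(2)$; in case (3) the matrix $MT^{-1} = \begin{pmatrix} a & b-a \\ c & d-c \end{pmatrix}$ has upper-right entry $b-a$, which is even since $a,b$ are both odd, so $M = (MT^{-1})T$ with $MT^{-1} \in \Gamma^0(2)$.

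The technical heart is case (2), the analog of Proposition \ref{prop_modforh}: for $M \in \Gamma^0(2)$, we need $\frac{1}{\sqrt{c\tau+d}}H_2(M\tau)$ to be a constant multiple of $H_2(\tau)$. I would verify this on a generating set of $\Gamma^0(2)$, for instance $-I$, $T^2$, and $STS^{-1} = \begin{pmatrix} 1 & 0 \\ -1 & 1 \end{pmatrix}$. Each check is a short computation from Theorem \ref{thm_zwegersomega}: iterating the $T$-formula gives $H_2(\tau+2) = \zeta_3 H_3(\tau+1) = \zeta_3^2 H_2(\tau)$, and the check for $STS^{-1}$ passes through $H_1(T(-1/\tau))$ and returns to $H_2(\tau)$ on the second application of $S$, with the two $\sqrt{-i\cdot}$ factors combining into the correct $\sqrt{c\tau+d}$. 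The underlying reason everything works out is that $\rho(T)$ and $\rho(S)$ are monomial matrices, so $\rho(M)$ is monomial for every $M \in \SL_2(\Z)$; the induced permutation of $\{1,2,3\}$ factors through $\SL_2(\Z/2\Z) \cong S_3$ and stabilizes the middle index precisely on $\Gamma^0(2)$.

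With case (2) in hand, cases (1) and (3) are immediate. For case (1), I factor $M = M'S$ with $M' := MS^{-1} \in \Gamma^0(2)$, apply case (2) to $M'$ at the argument $\sigma = S\tau$, and then substitute $H_2(-1/\tau) = \sqrt{-i\tau}\, H_1(\tau)$ from Theorem \ref{thm_zwegersomega}; the identity $c'\sigma + d' = (c\tau+d)/\tau$, with $(c',d')$ the bottom row of $M'$, causes the automorphy factors to recombine into $\sqrt{c\tau+d}$, leaving a constant multiple of $H_1(\tau)$. Case (3) is entirely analogous via $M = M'T$ and the translation formula $H_2(\tau+1) = \zeta_3 H_3(\tau)$. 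The main obstacle is the computation in case (2), specifically the bookkeeping of the $8$th and $24$th roots of unity and of the square-root branch across the generator $STS^{-1}$; no ingredients beyond Theorem \ref{thm_zwegersomega} are required.
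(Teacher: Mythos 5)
Your proof is correct, and it is exactly the argument the paper intends: the paper omits this proof, deferring to the parallel with Proposition \ref{prop_cesaromap}, and your coset decomposition of $\SL_2(\Z)$ modulo $\Gamma^0(2)$ via $M=(MS^{-1})S$ and $M=(MT^{-1})T$, with the invariance of $H_2$ on $\Gamma^0(2)$ checked on the conjugated generators $-I$, $T^2$, $STS^{-1}$, is precisely that parallel. The supporting computations (the coset criteria on $(a,b)\bmod 2$, the recombination $c'\sigma+d'=(c\tau+d)/\tau$, and the monomial-matrix remark) all check out.
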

\subsection{Congruences for $\omega$}
The coefficients of $\omega$ for which we expect the congruences as
stated in Theorem \ref{thm_mainthm} are exactly the Fourier
coefficients of $H_2(6\tau)$ at those powers of $q$ which have the
form $q^{120n+83}$ and $q^{120n+107}$. In order to sieve out these
coefficients, we do not apply the sieving operator $U_{r,m}$
directly but we sieve with twists of quadratic characters, because
this yields modular forms on bigger groups, which is favorable for
for computational reasons. Consider the characters
$\chi_3(n):=\left( \frac{n}{3}\right)$ and $\chi_5(n):=\left(
\frac{n}{5}\right)$ and the quadratic characters $\imod{8}$ which
are given as follows
\begin{center}
\begin{tabular}{c|cccccccc}
& 0 & 1 & 2 & 3 & 4 & 5 & 6 & 7 \\
\hline
$\chi_8^{(0)}$ & 0 & 1 & 0 & 1 & 0 & 1 & 0 & 1 \\
$\chi_8^{(1)}$ & 0 & 1 & 0 & 1 & 0 & -1 & 0 & -1 \\
$\chi_8^{(2)}$ & 0 & 1 & 0 & -1 & 0 & 1 & 0 & -1 \\
$\chi_8^{(3)}$ & 0 & 1 & 0 & -1 & 0 & -1 & 0 & 1 \\
\end{tabular}
\end{center}

Then it is easy to verify that

\[ \frac{1}{2}\left(\chi_3(n)-1\right)\chi_3(n)=\left\{
                                                                                          \begin{array}{ll}
                                                                                            1 & \hbox{if }n \equiv 2 \imod{3}, \\
                                                                                            0 & \hbox{if } n \equiv 0,1
                                                                                            \imod{3}.
                                                                                          \end{array}
                                                                                        \right.\]

\[ \frac{1}{2}\left(1+\chi_5(n)\right)\chi_5(n)=\left\{
                                                                                          \begin{array}{ll}
                                                                                            1 & \hbox{if }n \equiv 2,3 \imod{5}, \\
                                                                                            0 & \hbox{if } n \equiv 0,1,4
                                                                                            \imod{5}.
                                                                                          \end{array}
                                                                                        \right.
\]

\[ \frac{1}{4}\left(\chi_8^{(0)}(n)+\chi_8^{(1)}(n)-\chi_8^{(2)}(n)-\chi_8^{(3)}(n)\right)=\left\{
                                                                                          \begin{array}{ll}
                                                                                            1 & \hbox{if }n \equiv 3 \imod{8}, \\
                                                                                            0 & \hbox{if } n \not\equiv 3
                                                                                            \imod{8}.                                                                                                                                                                                      \end{array}
                                                                                        \right.\]

The product of the above functions is exactly the characteristic
function of the set $\{120n+83,120n+107| n \in \Z \}$. It is easy to
prove that the twisted forms of $H_2(6\tau)$ are harmonic weak Maass
form for a certain subgroup of $\SL_2(\Z)$ which may be explicitly
computed. More precisely we obtain the following result.

\begin{prop}\label{prop_omegasieve}
The function \[ \sum_{n \equiv 27,35 \imod{40}}
a_\omega(n)q^{3n+2}\] is a weakly holomorphic modular form of weight
$\frac{1}{2}$ for $\Gamma_0(86400)$ and $\theta$-multiplier with
character $\chi_{12}$.
\end{prop}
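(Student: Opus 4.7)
The plan is to realize our sieved series as an explicit finite $\C$-linear combination of character twists of $H_2(6\tau)$, and then to invoke the standard twist theorem for half-integral weight harmonic Maass forms, coupled with a short check that the non-holomorphic part is annihilated by the sieve.

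First I translate congruences. Since the holomorphic part of $H_2(6\tau)$ is $2q^2\omega(q^3)=2\sum_{n\geq 0}a_\omega(n)q^{3n+2}$, the condition $n\equiv 27,35\imod{40}$ on indices of $\omega$ corresponds exactly to the condition $3n+2\equiv 83,107\imod{120}$ on exponents of $q$. The character identity displayed just above the proposition expresses the characteristic function of $\{120k+83,\,120k+107:k\in\Z\}$ as a product of three factors, each a $\C$-linear combination of the quadratic characters $\chi_3,\chi_5,\chi_8^{(0)},\ldots,\chi_8^{(3)}$. Expanding this product termwise writes our candidate (up to the overall factor $2$ from the expansion of $H_2(6\tau)$) as a finite sum of twists $H_2(6\tau)\otimes\chi$, where each $\chi$ is a sub-product of these characters and has modulus dividing $r:=\operatorname{lcm}(3,5,8)=120$.

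Next I apply the twist theorem. By Theorem \ref{thm_garthwaite}, $H_2(6\tau)\in H_{1/2}(\Gamma_0(144),\nu_{\theta,\chi_{12}})$. The standard twisting result for half-integral weight forms (Shimura; see also \cite{ono}) shows that twisting such a form by a Dirichlet character $\chi$ of modulus $r$ yields a form on $\Gamma_0(\operatorname{lcm}(N,Nr,r^2))$ whose character is multiplied by $\chi^2$. Plugging in $N=144$ and $r=120$ gives $\operatorname{lcm}(144,17280,14400)=86400$; since each of $\chi_3,\chi_5,\chi_8^{(i)}$ is quadratic, $\chi^2$ is principal on its own modulus and the character on $\Gamma_0(86400)$ remains $\chi_{12}$. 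Thus every individual twist lies in $H_{1/2}(\Gamma_0(86400),\nu_{\theta,\chi_{12}})$, and so does their finite linear combination.

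It remains to verify that the non-holomorphic part of the sieved sum vanishes, so that what remains is a weakly holomorphic modular form rather than a genuine harmonic weak Maass form. By Lemma \ref{lem_omegafourier}, the non-holomorphic part of $H_2(6\tau)$ is supported on exponents of the form $-n^2$. A one-line residue check modulo $8$ suffices: the squares modulo $8$ lie in $\{0,1,4\}$, so $-n^2\equiv 0,4$, or $7\imod{8}$, whereas $83\equiv 107\equiv 3\imod{8}$. Hence no non-holomorphic term survives the sieve, and the sieved function is weakly holomorphic of the claimed type. The main obstacle in the whole argument is the level bookkeeping: obtaining exactly $\Gamma_0(86400)$ rather than a naive $\Gamma_0(144\cdot 120^2)$ requires the refined (rather than coarse) form of the twist formula together with careful tracking of the interaction between the conductor $12$ of $\chi_{12}$ and the twist modulus $120$; by contrast, the vanishing of the non-holomorphic part is an immediate mod-$8$ observation.
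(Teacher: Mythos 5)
Your proposal is correct and follows essentially the same route as the paper, which proves this proposition only by the sketch preceding it: decompose the characteristic function of $\{120k+83,120k+107\}$ into the displayed quadratic-character combinations, realize the sieve as a finite sum of twists of $H_2(6\tau)$, and compute the resulting level. Your added mod-$8$ verification that the non-holomorphic part of Lemma \ref{lem_omegafourier} is annihilated by the sieve is exactly the (implicit) step needed to upgrade ``harmonic weak Maass form'' to ``weakly holomorphic,'' and your refined level bookkeeping is consistent with the paper's claimed $\Gamma_0(86400)$.
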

In order to determine the behavior of this function at the cusps of
$\Gamma_0[86400]$ we prove the following analogue of Lemma
\ref{lem_cesaromatrixlemma}.
\begin{lem}\label{lem_omegamatrixlemma}
Let $\left(\begin{smallmatrix}  a & b \\  c & d
\\ \end{smallmatrix}\right) \in  \SL_2(\Z)$ and suppose $0 \leq s < m$ and $6|m$. Set  $l:=\gcd(\frac{cm}{6},sc+am)$. Then define
$\tilde{a}:=\frac{sc+am}{l}$ and $\tilde{c}:=\frac{cm}{6l}$. Let
$\tilde{d}$ be such that $\tilde{a}\tilde{d}-1 \equiv 0
\imod{\tilde{c}}$ and
$\tilde{b}:=\frac{\tilde{a}\tilde{d}-1}{\tilde{c}}$. Finally define
$t:=\frac{dl-\tilde{d}m}{c}$. Then for all $\tau \in \H$ and
$\left(\begin{smallmatrix} \tilde{a} & \tilde{b} \\  \tilde{c} &
\tilde{d}
\\ \end{smallmatrix}\right) \in  \SL_2(\Z)$ we have
\[  \left(
            \begin{array}{cc}
              6 & 0 \\
              0 & 1 \\
            \end{array}
          \right) \left(%
\begin{array}{cc}
  1 & \frac{s}{m} \\
  0 & 1 \\
\end{array}%
\right)\left(%
\begin{array}{cc}
  a & b \\
  c & d \\
\end{array}%
\right)\tau\ = \left(
                         \begin{array}{cc}
                            \tilde{a} & \tilde{b} \\
                            \tilde{c} & \tilde{d} \\
                          \end{array}
                        \right)\left(
     \begin{array}{cc}
       l & t \\
       0 & \frac{m^2}{6l} \\
     \end{array}
   \right)\tau. \]
\end{lem}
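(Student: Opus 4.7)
This lemma is a matrix identity that generalises Lemma~\ref{lem_cesaromatrixlemma} by inserting the extra left factor $\bigl(\begin{smallmatrix}6&0\\0&1\end{smallmatrix}\bigr)$, which is exactly what appears when the sieve operator is applied to $H_2(6\tau)$ rather than directly to $h_1(\tau)$. My plan is a direct computation of both sides, entirely parallel to the proof of Lemma~\ref{lem_cesaromatrixlemma}.

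Multiplying out the left-hand side yields
\[
\begin{pmatrix}6a+6sc/m & 6b+6sd/m \\ c & d\end{pmatrix},
\]
which, after rescaling by $m$ (a scalar multiplication that does not alter the induced Möbius transformation), becomes $\bigl(\begin{smallmatrix}6(am+sc)&6(bm+sd)\\ cm & dm\end{smallmatrix}\bigr)$. Multiplying out the right-hand side gives $\bigl(\begin{smallmatrix}\tilde{a}l & \tilde{a}t+\tilde{b}m^{2}/(6l)\\ \tilde{c}l & \tilde{c}t+\tilde{d}m^{2}/(6l)\end{smallmatrix}\bigr)$. I would then verify that six times this matrix equals the rescaled left-hand side, entry by entry. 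The top-left and bottom-left entries match immediately from the definitions $\tilde{a}=(am+sc)/l$ and $\tilde{c}=cm/(6l)$. The bottom-right entry gives $(cmt+\tilde{d}m^{2})/l=dm$, which is equivalent to $t=(dl-\tilde{d}m)/c$ and is thus precisely the defining relation for $t$. The top-right entry is the most involved: substituting $\tilde{b}=(\tilde{a}\tilde{d}-1)/\tilde{c}$ and the definition of $t$, and then using $\tilde{a}l=am+sc$, reduces $6\tilde{a}t+\tilde{b}m^{2}/l$ to $(6\tilde{a}dl-6m)/c=6m(ad-1)/c+6sd=6(bm+sd)$, where the last step invokes $ad-bc=1$.

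For the $\SL_{2}(\Z)$ claim I must check that $\tilde{a},\tilde{b},\tilde{c},\tilde{d}$ are integers and that $\tilde{a}\tilde{d}-\tilde{b}\tilde{c}=1$. Integrality of $\tilde{a}$ and $\tilde{c}$ is immediate from the divisibilities $l\mid am+sc$ and $l\mid cm/6$ built into the definition of $l$. The key observation is that $\gcd(\tilde{a},\tilde{c})=1$: since $l=\gcd(cm/6,\,am+sc)$ is chosen to absorb the entire common factor, the quotients $\tilde{a}=(am+sc)/l$ and $6\tilde{c}=cm/l$ are coprime, and \emph{a fortiori} so are $\tilde{a}$ and $\tilde{c}$. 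This guarantees that an integer $\tilde{d}$ with $\tilde{a}\tilde{d}\equiv 1\imod{\tilde{c}}$ exists, and then $\tilde{b}=(\tilde{a}\tilde{d}-1)/\tilde{c}$ is automatically an integer with $\tilde{a}\tilde{d}-\tilde{b}\tilde{c}=1$ by construction.

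The proof is pure bookkeeping, so there is no substantive obstacle; the only point requiring care is that the factor of $6$ appearing in $\tilde{c}=cm/(6l)$ and in the lower-right entry $m^{2}/(6l)$ is exactly what is required so that the upper-triangular factor on the right has determinant $m^{2}/6$ instead of $m^{2}$, thereby absorbing the $\bigl(\begin{smallmatrix}6&0\\0&1\end{smallmatrix}\bigr)$ on the left into the new decomposition.
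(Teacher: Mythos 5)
Your direct computation is precisely the ``straightforward calculation'' the paper alludes to but never writes out (it gives no proof of this lemma, only of its analogue by reference), and the entry-by-entry verification --- including the top-right entry, which reduces to $6(bm+sd)$ via $ad-bc=1$, and the bottom-right entry recovering the defining relation for $t$ --- is correct. One slip in the $\SL_2(\Z)$ part: your intermediate claim that $\tilde a=(am+sc)/l$ and $6\tilde c=cm/l$ are coprime is false in general (take $a=c=d=1$, $b=0$, $s=0$, $m=6$: then $l=1$, $\tilde a=6$, $6\tilde c=6$); what the definition $l=\gcd(cm/6,\,am+sc)$ gives directly is the weaker, and sufficient, statement $\gcd(\tilde a,\tilde c)=\gcd\bigl((am+sc)/l,(cm/6)/l\bigr)=1$, which is all that is needed for the existence of $\tilde d$ and hence of $\tilde b$. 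With that one-line repair the argument is complete and is the intended proof.
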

Using this lemma and the transformation properties of $H_2(\tau)$ we
can deduce the following lemma with a proof completely analogous to
the proof of Lemma \ref{lem_cesarocuspbehaviour}.
\begin{lem}\label{lem_omegacuspbehaviour}
Let $\frac{a}{c}$ be a cusp and suppose that
$\left(\begin{smallmatrix} a & b \\  c & d
\\ \end{smallmatrix}\right) \in  \SL_2(\Z)$ is a matrix which maps
$\infty$ to this cusp. Let $m$ be some integer divisible by $6$ and
$0 \leq s < m$.

\begin{enumerate}
\item The Fourier expansion of the holomorphic part of $H_2(6(\tau+\frac{s}{m}))$
 at this cusp is the Fourier expansion at $\infty$ of
$H_2(\tau)$ under the following transformation
\[ \left(
                         \begin{array}{cc}
                            \tilde{a} & \tilde{b} \\
                            \tilde{c} & \tilde{d} \\
                          \end{array}
                        \right)\left(
     \begin{array}{cc}
       l & t \\
       0 & \frac{m^2}{6l} \\
     \end{array}
   \right)\]
with notations as in Lemma \ref{lem_omegamatrixlemma}.
\item The Fourier expansion of the holomorphic part of $H_2(6\tau)$ at this cusp
starts up (to a constant with)
\begin{enumerate}
\item $q^{-\frac{l^2}{4m^2}}$, if $\tilde{a}$ is even and $\tilde{b}$ is
odd.
\item $q^{\frac{l^2}{18m^2}}$, if $\tilde{a}$ is odd and $\tilde{b}$ is even.
\item $q^{\frac{l^2}{18m^2}}$, if $\tilde{a}$ is odd and $\tilde{b}$ is odd.
\end{enumerate}
\end{enumerate}
\end{lem}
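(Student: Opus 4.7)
My plan is to mimic the proof of Lemma~\ref{lem_cesarocuspbehaviour} step by step, using Lemma~\ref{lem_omegamatrixlemma} and Proposition~\ref{prop_omegamap} in place of the Cesaro-case analogues.

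For part~(1), I fix $\gamma = \left(\begin{smallmatrix} a & b \\ c & d \end{smallmatrix}\right) \in \SL_2(\Z)$ sending $\infty$ to the cusp $a/c$. By definition, the Fourier expansion of $H_2(6(\tau+s/m))$ at $a/c$ is the expansion of $H_2(6(\gamma\tau+s/m))$ as $\tau \to i\infty$. Writing $6(\gamma\tau+s/m) = \left(\begin{smallmatrix} 6 & 0 \\ 0 & 1 \end{smallmatrix}\right)\left(\begin{smallmatrix} 1 & s/m \\ 0 & 1 \end{smallmatrix}\right)\gamma\tau$ and invoking Lemma~\ref{lem_omegamatrixlemma} factors this M\"obius transformation as $\tilde{M}\,M'\tau$, with $\tilde{M} = \left(\begin{smallmatrix} \tilde{a} & \tilde{b} \\ \tilde{c} & \tilde{d} \end{smallmatrix}\right) \in \SL_2(\Z)$ and $M' = \left(\begin{smallmatrix} l & t \\ 0 & m^2/(6l) \end{smallmatrix}\right)$; this is exactly the statement of part~(1).

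For part~(2), I apply Proposition~\ref{prop_omegamap} to $\tilde{M}$. Up to a nonzero constant coming from the $\theta$-multiplier and the polynomially growing automorphy factor $(\tilde{c}\tau'+\tilde{d})^{1/2}$, this identifies $H_2(\tilde{M}\tau')$ with one of $H_1(\tau')$, $H_2(\tau')$, or $H_3(\tau')$ according as $(\tilde{a},\tilde{b})$ is (even, odd), (odd, even), or (odd, odd). After substituting $\tau' = M'\tau$ and letting $\tau \to i\infty$, the leading Fourier coefficient of $H_2(6(\gamma\tau+s/m))$ in $q=e^{2\pi i \tau}$ is governed by the leading Fourier coefficient of the corresponding $H_j$ at $\infty$, read off directly from the defining formulas of $F_1, F_2, F_3$, rescaled by the action of the upper-triangular matrix $M'$ on $\tau$. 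Carrying out this rescaling in each of the three parity cases yields the three expressions~(a), (b), (c) claimed in the lemma.

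The argument is essentially bookkeeping; no step presents a serious conceptual difficulty. The main care required is in correctly matching each of the three parity cases of Proposition~\ref{prop_omegamap} to its corresponding leading exponent and in verifying that the multiplicative constants produced by the modular transformation and by the leading Fourier coefficients of $H_1, H_2, H_3$ are nonzero, so that the asserted leading term is genuinely the leading term.
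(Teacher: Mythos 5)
Your proposal is correct in method and is exactly the route the paper takes: the paper offers no independent proof of this lemma, saying only that it is ``completely analogous'' to Lemma \ref{lem_cesarocuspbehaviour}, and your factorization via Lemma \ref{lem_omegamatrixlemma} followed by Proposition \ref{prop_omegamap} and the rescaling by $\left(\begin{smallmatrix} l & t \\ 0 & m^2/(6l) \end{smallmatrix}\right)$ is precisely that argument. One caveat on the final ``bookkeeping'' step you did not carry out explicitly: that upper-triangular matrix scales Fourier exponents by $6l^2/m^2$, which sends the leading exponent $-\tfrac{1}{24}$ of $F_1$ to $-\tfrac{l^2}{4m^2}$ as in case (a), but sends the leading exponent $\tfrac{1}{3}$ of $F_2$ and $F_3$ to $\tfrac{2l^2}{m^2}$ rather than the stated $\tfrac{l^2}{18m^2}$ --- both are positive, so the discrepancy (apparently a slip in the paper's statement) is harmless for the pole-order bounds in Proposition \ref{prop_omegacuspform}, but your assertion that the rescaling ``yields'' expressions (b) and (c) exactly as written does not check out and should be flagged.
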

Our next task is to construct a suitable cusp form which we multiply
with
\[ \sum_{n \equiv 27,35 \imod{40}} a_\omega(n)q^{3n+2} \] in order
to get a modular form that is holomorphic at the cusps to which we
may apply Sturm's Theorem. Our result is as follows.

\begin{prop}\label{prop_omegacuspform}
The function  $\eta^{240}(120\tau)\Delta^2(\tau)$ is a cusp form of
weight $144$ for $\Gamma_0(86400)$. Furthermore the function
\[ \eta^{240}(120\tau)\Delta^2(\tau) \sum_{n \equiv 27,35 \imod{40}}
a_\omega(n)q^{3n+2}\] is holomorphic at every cusp of
$\Gamma_0(86400)$.
\end{prop}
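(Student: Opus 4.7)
The plan is to follow the template of Proposition \ref{prop_cesarocuspform} essentially verbatim, since the $\omega$-situation mirrors the Cesaro case structurally. Two tasks need to be carried out: (i) showing that $\eta^{240}(120\tau)\Delta^2(\tau)$ is a cusp form of weight $144$ for $\Gamma_0(86400)$, and (ii) verifying at every cusp of $\Gamma_0(86400)$ that the order of vanishing of this cusp form dominates the pole order of the sieved series $\sum_{n\equiv27,35\imod{40}}a_\omega(n)q^{3n+2}$.

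For (i), I would invoke Theorem 1.65 of \cite{ono} with $N=86400$, $r_{120}=240$, $r_1=48$ (using $\Delta^2(\tau)=\eta^{48}(\tau)$), and $r_\delta=0$ for all other divisors $\delta$ of $N$. The weight is $\tfrac12(240+48)=144$. The congruence $\sum_\delta \delta r_\delta=48+120\cdot240=28848\equiv0\imod{24}$, the dual congruence $\sum_\delta(N/\delta)r_\delta=86400\cdot48+720\cdot240=4320000\equiv0\imod{24}$, and the square condition $\prod_\delta\delta^{r_\delta}=120^{240}=(120^{120})^2$ are all satisfied. Hence the $\eta$-product is a holomorphic modular form on $\Gamma_0(86400)$ with trivial character, and Theorem 1.65 furnishes an explicit formula for its order of vanishing at each cusp $\tfrac{a}{c}$ in terms of $\gcd(\delta,c)$, $\gcd(c,N/c)$, and the $r_\delta$, which I would evaluate divisor by divisor.

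For (ii), I would first decompose the sieved function as a finite linear combination of character twists of $H_2(6\tau)$, using the three projectors listed above Proposition \ref{prop_omegasieve} that are built from $\chi_3$, $\chi_5$, and the $\chi_8^{(j)}$. Each such twist is itself a finite linear combination of shifts $H_2(6(\tau+s/m))$ with $m$ a multiple of $120$. Lemma \ref{lem_omegamatrixlemma} then expresses the action of a cusp-representative matrix on such a shift in the required triangular form, and Lemma \ref{lem_omegacuspbehaviour} bounds the leading exponent there by $-\tfrac{l^2}{4m^2}$ or $\tfrac{l^2}{18m^2}$ according to the parity of $\tilde{a}$ and $\tilde{b}$. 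Taking the maximum over the finitely many shifts contributing to the sieved function yields an explicit upper bound for the pole order at each cusp of $\Gamma_0(86400)$, which I would compare against the vanishing order from (i).

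The main obstacle is computational volume rather than anything conceptual: $\Gamma_0(86400)$ has on the order of thousands of cusp representatives $\tfrac{a}{c}$ (indexed by $c \mid 86400$ and residues of $a$ modulo $\gcd(c,N/c)$), and at each cusp one must run the bookkeeping of Lemma \ref{lem_omegamatrixlemma} for every shift contributing to every character twist, take the worst-case pole order, and check that it is dominated by the vanishing order of $\eta^{240}(120\tau)\Delta^2(\tau)$. As in Proposition \ref{prop_cesarocuspform}, all the analytic work is already contained in the surrounding lemmas, so this verification reduces to a mechanical, if lengthy, computer calculation.
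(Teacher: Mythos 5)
Your proposal matches the paper's proof, which is stated only as ``analogous to the proof of Proposition \ref{prop_cesarocuspform}'': the $\eta$-quotient claim via Theorem 1.65 of \cite{ono} with $r_{120}=240$, $r_1=48$ (and your congruence and square checks are numerically correct), and the cusp-holomorphicity via a computer comparison of the pole-order bounds from Lemmas \ref{lem_omegamatrixlemma} and \ref{lem_omegacuspbehaviour} against the vanishing orders of the $\eta$-product at each cusp of $\Gamma_0(86400)$. No substantive differences.
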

The proof is analogous to the proof of Proposition
\ref{prop_cesarocuspform}. As an easy consequence of this
proposition and Sturm's theorem we get the following result.

\begin{prop}\label{prop_omegasturm}
The congruences
\[ a_\omega(40n+27) \equiv a_\omega(40n+35) \equiv 0 \pmod{5}\]
hold for all $n$ if they hold for all $n$ for which $40n+27$ or $
40n+35$ is less than or equal to 832$\;$320.
\end{prop}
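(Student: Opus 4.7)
The plan is to mimic Proposition \ref{prop_cesarosturm} almost verbatim, working with the single product
\[ G(\tau) := \eta^{240}(120\tau)\Delta^2(\tau) \sum_{n \equiv 27,35 \imod{40}} a_\omega(n)q^{3n+2}. \]
Combining Propositions \ref{prop_omegasieve} and \ref{prop_omegacuspform}, $G$ is a holomorphic modular form of weight $\tfrac{289}{2}$ on $\Gamma_0(86400)$ with respect to the theta multiplier twisted by $\chi_{12}$; in particular Proposition \ref{prop_omegacuspform} does the real work of killing the potential poles at the cusps, so that $G$ is genuinely holomorphic (not just weakly holomorphic). Since each factor of $G$ has integral Fourier coefficients ($a_\omega(n)\in\Z$ by construction, the $\eta$-product and $\Delta^2$ sit in $\Z[[q^{1/24}]]$ and in fact in $\Z[[q]]$ after the scaling $120\tau$), $G$ itself lies in $\Z[[q]]$, so its coefficients are amenable to a mod-$5$ Sturm check.

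Next I would apply Sturm's theorem exactly as in the Cesaro case. With doubled weight $k = 289$ and
\[ [\SL_2(\Z) : \Gamma_0(86400)] = 86400 \cdot \tfrac{3}{2} \cdot \tfrac{4}{3} \cdot \tfrac{6}{5} = 207360, \]
the bound produced in Proposition \ref{prop_cesarosturm} is
\[ \tfrac{k}{24}[\SL_2(\Z) : \Gamma_0(86400)] = \tfrac{289 \cdot 207360}{24} = 2\,496\,960, \]
so it is enough to verify that every Fourier coefficient of $G$ up to $q^{2\,496\,960}$ is divisible by $5$. The relevant coefficients (those that carry information about $a_\omega$) sit at exponents $3m+2$ with $m = 40n+27$ or $m = 40n+35$, and $3m+2 \leq 2\,496\,960$ translates exactly into $m \leq 832\,320$, matching the stated bound.

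The last step is to pass from "all coefficients of $G$ are divisible by $5$" to "the coefficients of $\sum_{n \equiv 27,35\imod{40}} a_\omega(n) q^{3n+2}$ are divisible by $5$". This is handled by the inductive unit-argument that ends the proof of Proposition \ref{prop_cesarosturm}: once one factors out the leading power of $q$ from the $\eta$-product $\eta^{240}(120\tau)\Delta^2(\tau)$, what remains is an element of $1+q\Z[[q]]$, hence a unit in $\Z[[q]]$, so divisibility of $G$'s coefficients by $5$ propagates, coefficient by coefficient, back to the sieved sum.

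The main obstacle here is not conceptual but computational: tracking the weight $\tfrac{289}{2}$, the index $207360$, the twist by $\chi_{12}$, and the factor-of-three dictionary $m \leftrightarrow 3m+2$ between the sieved-sum index and the $q$-exponent in $G$. Once that dictionary is set up, everything is a verbatim translation of the argument for $\our$, and no new ideas beyond Sturm's theorem and the preceding propositions are needed.
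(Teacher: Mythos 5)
Your proposal is correct and follows essentially the same route the paper intends: the paper explicitly derives Proposition \ref{prop_omegasturm} "as an easy consequence" of Proposition \ref{prop_omegacuspform} and Sturm's theorem, mirroring the proof of Proposition \ref{prop_cesarosturm}, which is exactly the argument you give (weight $\tfrac{289}{2}$, index $207360$, Sturm bound $2\,496\,960$, and the unit/induction step to pull divisibility back from the product to the sieved sum). The numerology checks out, with the only cosmetic quibble being that $3m+2\leq 2\,496\,960$ forces $m\leq 832\,319$ rather than "exactly" $m\leq 832\,320$; since the proposition only asserts a sufficient set of coefficients to check, this is harmless.
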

Using a computer program, which can be found on the author's
homepage\\
http://www.mi.uni-koeln.de/\~{ }mwaldher/, we computed enough
coefficients of both $\omega$ and $\our$ in order to deduce Theorem
\ref{thm_mainthm} from Proposition \ref{prop_cesarosturm} and
Proposition \ref{prop_omegasturm}.

\end{document}